\theoremstyle{definition}
\newtheorem{definition}{Definition}[section]
\newtheorem{remark}[definition]{Remark}
\newtheorem{example}[definition]{Example}
\theoremstyle{plain}
\newtheorem{theorem}[definition]{Theorem}
\newtheorem{lemma}[definition]{Lemma}
\newtheorem{corollary}[definition]{Corollary}
\newtheorem{proposition}[definition]{Proposition}
\numberwithin{equation}{section}
\newcommand{\R}{\ensuremath{\mathbb{R}}}     
\newcommand{\N}{\ensuremath{\mathbb{N}}}     
\newcommand{\Z}{\ensuremath{\mathbb{Z}}}     
\newcommand{\Q}{\ensuremath{\mathbb{Q}}}     
\newcommand{\eps}{\varepsilon}
\begin{document}
\author[J.M.~Thuswaldner]{J\"org M.~Thuswaldner}
\address{Chair of Mathematics and Statistics, University of Leoben, Franz-Josef-Strasse 18, A-8700 Leoben, Austria}
\email{joerg.thuswaldner@unileoben.ac.at}
\title[$\boldsymbol{\beta}$-adic Halton sequences]{Discrepancy bounds for $\boldsymbol{\beta}$-adic Halton sequences}
\dedicatory{Dedicated to Professor Robert F. Tichy on the occasion of his 60$^{\,th}$ birthday}

\begin{abstract} 
Van der Corput and Halton sequences are well-known low-dis\-cre\-pan\-cy sequences. Almost twenty years ago Ninomiya defined analogues of van der Corput sequences for $\beta$-numeration and proved that they also form low-dis\-cre\-pan\-cy sequences if $\beta$ is a Pisot number. Only very recently Robert Tichy and his co-authors succeeded in proving that $\boldsymbol{\beta}$-adic Halton sequences are equidistributed for certain parameters $\boldsymbol{\beta}=(\beta_1,\ldots,\beta_s)$ using methods from ergodic theory. In the present paper we continue this research and give discrepancy estimates for $\boldsymbol{\beta}$-adic Halton sequences for which the components $\beta_i$ are $m$-bonacci numbers. Our methods are quite different and use dynamical and geometric properties of Rauzy fractals that allow to relate $\boldsymbol{\beta}$-adic Halton sequences to rotations on high dimensional tori. The discrepancies of these rotations can then be estimated by classical methods relying on W.~M.~Schmidt's Subspace Theorem.
\end{abstract}

\subjclass[2010]{Primary: 11K38, 11B83; Secondary: 11A63}
\keywords{Halton sequence, discrepancy, Rauzy fractal}
\date{\today}
\thanks{Supported by projects I1136 and P27050 granted by the Austrian Science Fund (FWF)}

\maketitle
	
\section{Introduction}
Given $q\in\N$ with $q\ge 2$, each integer $n \ge 0$ admits a unique {\em $q$-ary expansion} $n=\sum_{j=0}^L\eps_j(n)q^j$ with $\eps_j(n) \in \{0,\ldots, q-1\}$ and $\eps_L(n)\not=0$ for $L\not=0$. Using this expansion we can define the so-called {\em van der Corput sequence}
\[
v_q(n)= \sum_{j=0}^{L}\eps_j(n)q^{-j-1} \in [0,1)  \qquad(n \ge  0). 
\]
As mentioned for instance in Kuipers and Niederreiter~\cite[Chapter~2, Section~3]{Kuipers-Niederreiter:74} this sequence has optimal equidistribution properties modulo $[0,1)$ and, hence, is a so-called {\em low-discrepancy sequence}. A generalization to higher dimensions $s \in\N$ is provided by the {\em Halton sequence} which is defined for each parameter vector $\boldsymbol{q}=(q_1,\ldots,q_s)$, $q_i\ge 2$, by
\[
h_{\boldsymbol{q}}(n) = (v_{q_1}(n),\ldots, v_{q_s}(n))\qquad(n\ge 0).
\]
Halton sequences admit strong equidistribution properties modulo $[0,1)^s$ for parameter vectors $\boldsymbol{q}$ with pairwise relatively prime entries (see \cite[p.~129]{Kuipers-Niederreiter:74} or \cite{Halton:60}).

These concepts can be carried over to so-called {\em linear recurrent number systems} (see {\it e.g.}~\cite{Grabner-Tichy:90,Petho-Tichy:89}) and {\em $\beta$-expansions} ({\it cf. }~\cite{Akiyama:07,Parry:60,Renyi:57}). As we start out from linear recurrent number systems, we briefly recall the definition of these objects. For $d \in\N$ let 
\begin{equation}\label{eq:generalrecurrence}
G_{k+d}= a_1 G_{k+d-1}+a_2G_{k+d-2}+\cdots+a_dG_k \qquad(k\ge 0)
\end{equation}
be a linear recurrence with integral coefficients $a_1,\ldots, a_d$ and integral initial values $G_0,\ldots,G_{d-1}$. Suppose the coefficients satisfy $a_1\ge a_2\ge\cdots \ge a_d\ge1$ ($a_1 > 1$ is needed for $d=1$) and the conditions
\begin{equation}\label{eq:recurrenceconditions}
G_0=1,\quad G_k = a_1G_{k-1}+\cdots + a_kG_0+1 \quad\hbox{for}\quad k\in \{1,\ldots, d-1\}
\end{equation}
are in force. Then we can expand each $n\in\mathbb{N}$ uniquely by a {\em greedy algorithm} as
\begin{equation}\label{eq:greedy}
n=\sum_{j=0}^\infty \varepsilon_j(n) G_j,
\end{equation}
where the digit string $\ldots\eps_1(n)\eps_0(n)$ satisfies
\begin{equation}\label{eq:lex}
\eps_k(n)\ldots\eps_1(n)\eps_0(n)0^\infty < (a_1\ldots a_{d-1} (a_d-1))^{\infty}
\end{equation}
in lexicographic order for each $k\in\N$ (see~\cite{Petho-Tichy:89} for details).  

Let $\beta$ be the dominant root of the characteristic equation of \eqref{eq:generalrecurrence}. Then the {\em $\beta$-adic van der Corput sequence} is defined by
\begin{equation}\label{eq:betavanderCorput}
V_\beta(n) = \sum_{j\ge 0} \varepsilon_j(n) \beta^{-j-1}.
\end{equation}
One of the problems in the $\beta$-adic case is that the asymmetry of the language of possible digit strings $\ldots\varepsilon_1(n)\varepsilon_0(n)$ entails that reflecting expansions on the decimal point destroys the equidistribution properties (see~\cite[Section~2]{Steiner:09}). However, symmetric languages still lead to equidistribution of $V_\beta(n)$ modulo $1$. For instance, as observed in Barat and Grabner~\cite[Section~4]{Barat-Grabner:96}, $\beta$-adic van der Corput sequences are equidistributed modulo $1$ for recurrences of the types
\begin{equation}\label{eq:Barat}
\begin{array}{rll}
\displaystyle G_{k+d}&\displaystyle =a(G_{k+d-1}+\cdots+G_{k}) &\quad(d\ge 2,\; a\in \N) \qquad\hbox{and}\\[6pt]
\displaystyle G_{k+2}&\displaystyle=(a+1)G_{k+1} + aG_k &\quad (a\in \N). 
\end{array}
\end{equation}
Ninomiya~\cite{Ninomiya:98} came up with a slightly different definition of $\beta$-adic van der Corput sequences that are equidistributed modulo $1$ for every Pisot number $\beta$. In particular, instead of reflecting expansions on the decimal point he reorders them w.r.t.\ the reverse lexicographic order. However, as mentioned in \cite[Acknowledgements]{Ninomiya:98}, if the linear recurrence is of the form \eqref{eq:Barat} then Ninomiya's sequence agrees with the one defined in \eqref{eq:betavanderCorput}. Generalizing Ninomiya's construction, Steiner~\cite{Steiner:09} defines and studies van der Corput sequences for abstract numeration systems in the sense of Lecomte and Rigo~\cite{Lecomte-Rigo:01,Lecomte-Rigo:02}. 

Analogously to the classical Halton sequence we define its $\boldsymbol{\beta}$-adic variant by
\[
H_{\boldsymbol{\beta}}(n) = (V_{\beta_1}(n),\ldots,V_{\beta_s}(n)) \qquad (\boldsymbol{\beta}=(\beta_1,\ldots,\beta_s)).
\]
The first result on equidistribution properties of $\boldsymbol{\beta}$-adic Halton sequences is due to Robert Tichy and his co-authors (see Hofer {\it et al.}~\cite{Hofer-Iaco-Tichy:15}) and reads as follows. Let $(G^{(1)}_k),\ldots, (G^{(s)}_k)$ be linear recurrent sequences  of the form \eqref{eq:Barat} (indeed, they could exhibit a slightly larger class, see \cite[Lemma~1]{Hofer-Iaco-Tichy:15}), and let $\beta_1,\ldots, \beta_s$ be the dominant roots of the associated characteristic equations. Then, under suitable assumptions on the algebraic independence of the elements $\beta_i$, the sequence $H_{\boldsymbol{\beta}}(n) $ is equidistributed modulo $[0,1)^s$ for $\boldsymbol{\beta}=(\beta_1,\ldots, \beta_s)$. This result is proved by methods from ergodic theory and provides no information on the discrepancy (see also the generalizations proved in \cite{JassovaEtAl:15}). Recently, Drmota~\cite{Drmota:15} considered ``hybrid'' Halton sequences of vectors containing classical van der Corput sequences plus one component which is equal to the van der Corput sequence $V_\varphi(n)$, with $\varphi$ being the golden ratio (corresponding to the Fibonacci sequence). His approach is different and he is able to give good bounds on the discrepancy that are close to optimality.

To state some of the mentioned results more precisely we introduce some notation. For $s \in\N$ and $A\subset [0,1)^s$ we denote by $\mathbf{1}_A$ the characteristic function of $A$. For a given sequence $(\mathbf{y}_n)_{n \ge 0}$ in $[0,1)^s$ we define the {\em (star) discrepancy} by
\[
D_N((\mathbf{y}_n)_{n\ge 0}) = \sup_{0< \omega_1,\ldots, \omega_s \le 1} \left\vert
\frac{1}{N}\sum_{n=0}^{N-1}\mathbf{1}_{[0,\omega_1)\times\cdots\times [0,\omega_s)}(\mathbf{y}_n)-\omega_1\cdots\omega_s
\right\vert.
\]
It is well known that $D_N((\mathbf{y}_n)_{n\ge 0})$ tends to $0$ for $N\to \infty$ if and only if $(\mathbf{y}_n)_{n\ge 0}$ is equidistributed modulo $[0,1)^s$. Moreover, this quantity provides a gauge for the quality of the equidistribution of a sequence. For more on discrepancy we refer to \cite{Drmota-Tichy:97,Kuipers-Niederreiter:74}.

Using this notion the above-mentioned results read as follows: for the van der Corput sequence it is well known that $D_N((v_q(n))_{n\ge 0}) \ll \log N/N$ and for the $s$-dimensional Halton sequence we have $D_N((h_{\boldsymbol{q}}(n))_{n\ge 0})\ll (\log N)^s/N$ (see {\it e.g.}~\cite{Kuipers-Niederreiter:74}; in \cite{Hofer:16} it is shown that this result extends to Halton sequences defined in terms of number systems with rational bases in the sense of \cite{AFS:08}). In the $\beta$-adic case Ninomiya~\cite{Ninomiya:98} shows that $D_N((V_\beta(n))_{n\ge 0}) \ll \log N/N$ as well, while for the $\boldsymbol{\beta}$-adic Halton sequences $H_{\boldsymbol{\beta}}(n)$ no such estimates seem to be known. Concerning the hybrid case Drmota~\cite{Drmota:15} shows that for each $\eps>0$ one has $D_N((v_{q_1}(n),\ldots , v_{q_s}(n), V_\varphi(n))_{n\ge 0}) \ll N^{\eps-1}$.
 
A large number of other variants of van der Corput and Halton sequences has been studied in the literature and for many of them strong discrepancy  bounds can be derived (see {\it e.g.}~\cite{Carbone:12,FKP:15,Faure-Lemieux:10,FIN:01,Hofer:16,HKLP:09,Mori-Mori:12}).
 
Our aim is to give estimates for the discrepancy of $\boldsymbol{\beta}$-adic Halton sequences that are defined in terms of $m$-bonacci numbers. To be more precise, for $m\ge 2$ define the {\em $m$-bonacci sequence} $(F_k^{(m)})_{k\ge 0}$ by
\begin{equation}\label{eq:Fmrec}
\begin{array}{rll}
\displaystyle F_k^{(m)} & \displaystyle = 2^k,  \quad\qquad &0\le k \le m-1,\\[5pt]
\displaystyle F_k^{(m)} & \displaystyle = \sum_{j =1}^{m} F_{k-j}^{(m)}, \quad &k \ge m.
\end{array}
\end{equation}
It is easy to see that $(F_k^{(m)})_{k\ge0}$ satisfies the conditions in \eqref{eq:recurrenceconditions} for $m\ge 2$ and, hence, the $m$-bonacci sequence admits a unique ``greedy'' expansion \eqref{eq:greedy} for each $n\in \mathbb{N}$. In what follows, we denote the dominant root of the characteristic equation of the recurrence of $(F_k^{(m)})$ by $\varphi_m$ and call it the {\it $m$-bonacci number}. Choose pairwise distinct integers $m_1,\ldots, m_s$ greater than $1$, and let $\beta_i=\varphi_{m_i}$ for $i\in\{1,\ldots,s\}$ and $\boldsymbol{\beta}=(\beta_1,\ldots,\beta_s)$. Our main result will establish a discrepancy estimate for the sequence $H_{\boldsymbol{\beta}}(n)$ for these choices of vectors $\boldsymbol{\beta}$ (see Theorem~\ref{thm:main}).

\section{Preliminaries}

In \cite[Lemma~6]{Drmota:15} the discrepancy of the van der Corput sequence $V_\varphi(n)$ associated with the golden ratio $\varphi$ was expressed in terms of local discrepancies of certain rotations. To this matter, relations between Ostrowski expansions and irrational rotations as discussed in \cite[Section~3.2]{Haynes:16} are used. As anticipated in \cite[Section~1]{Drmota:15}, this doesn't generalize to higher degree recurrences and should be replaced by ``a distribution analysis of linear sequences (modulo 1) in Rauzy fractal type sets''. This is what we do for our class of $m$-bonacci recurrences. Before we can define these objects we need some notation.

Let $m\ge 2$ be fixed. A {\em substitution} $\sigma$ over a finite alphabet $\mathcal{A}=\{1,\ldots, m\}$ is an endomorphism over the free monoid $\mathcal{A}^*$ of finite words over $\mathcal{A}$ (equipped with the operation of concatenation). The {\em incidence matrix} $B_\sigma$ of $\sigma$ is the $m\times m$ matrix given by $B_\sigma=(|\sigma(j)|_i)_{i,j\in\mathcal{A}}$. Here, for a word $w\in \mathcal{A}^*$ we denote the number of occurrences of the letter $i\in \mathcal{A}$ in $w$ by $|w|_i$. Moreover, we will use $|w|$ for the number of letters of $w$. The {\it abelianization map} $\mathbf{l}:\mathcal{A}^* \to \N^m$ is defined by $\mathbf{l}(w)=(|w|_1,\ldots,|w|_m)$. It satisfies the relation $\mathbf{l}\sigma(w)=B_\sigma\mathbf{l}(w)$ for each $w\in \mathcal{A}^*$. There is a natural extension of a substitution $\sigma$ to the space of infinite words $\mathcal{A}^{\N}$, which is equipped with the product topology of the discrete topology on $\mathcal{A}$. If we assume that $\sigma(1)$ starts with the letter $1$ (what we will do throughout this paper) it is easy to see that the limit
\begin{equation}\label{eq:fix}
u=u_1u_2\ldots=\lim_{n\to\infty} \sigma^n(11\ldots) 
\end{equation}
exists and satisfies $\sigma(u)=u$. 

A substitution $\sigma$ is called a {\em unit Pisot substitution} if the characteristic polynomial of $B_\sigma$ is the minimal polynomial of a Pisot unit. This class of substitutions is well studied (see {\it e.g.}~\cite{ABBLS:15,BST:10,Ito-Rao:06} for their dynamical and geometric properties). We wish to associate a {\em Rauzy fractal} with a unit Pisot substitution $\sigma$. To this matter let $\mathbf{u}$ and $\mathbf{v}$ be the normalized right and left eigenvector of $B_\sigma$ associated with the dominant Pisot eigenvalue of $B_\sigma$, respectively. Then the orthogonal space $\mathbf{v}^\bot$ is the contractive invariant hyperspace of the matrix $B_\sigma$. We define $\pi_c$ to be the projection of $\mathbb{R}^m$ along $\mathbf{u}$ to this contracting hyperspace. If $u=u_1u_2\ldots$ is given by \eqref{eq:fix}, the {\em Rauzy fractal} $\mathcal{R}$ associated with the substitution $\sigma$ is defined by 
\[
\mathcal{R} = \overline{\{\pi_c\mathbf{l}(u_1u_2\ldots u_n) \;:\; n\in \mathbb{N}\}}.
\] 
Obviously we have that $\mathcal{R}$ is a subset of $\mathbf{v}^\bot$ which can be viewed as the union of the {\it subtiles}
\[
\mathcal{R}(i) = \overline{\{\pi_c\mathbf{l}(u_1u_2\ldots u_n) \;:\; u_{n+1}=i,\, n\in \mathbb{N}\}}\qquad(i\in\mathcal{A}).
\] 
These sets first occur in \cite{Rauzy:82} and have been extensively studied since then ({\it cf. e.g.}~\cite{ABBLS:15,Arnoux-Ito:01,BST:10,Ito-Rao:06}). The set $\mathcal{R}(i)$, $i\in\mathcal{A}$, is a compact subset of $\mathbf{v}^{\bot}$ which is the closure of its interior and whose boundary has zero measure (see for instance~\cite{BST:10}). 

Under certain conditions (such as for example the {\it super coincidence condition} introduced in \cite{Ito-Rao:06}) the subtiles $\mathcal{R}(i)$ are essentially disjoint and $\mathcal{R}$ forms a fundamental domain of the lattice $\mathcal{L}=\langle \pi_c(\mathbf{e}_1-\mathbf{e}_2),\ldots, \pi_c(\mathbf{e}_1-\mathbf{e}_m) \rangle_\mathbb{Z}$ and, hence, tiles $\mathbf{v}^\bot$ by translates taken from this lattice (see \cite[Theorems~1.2 and ~1.3]{Ito-Rao:06}). Here $\mathbf{e}_i$ ($1\le i \le m$) are the standard basis vectors of $\mathbb{R}^m$. 

To state a set equation for the subtiles $\mathcal{R}(i)$ we need the following  {\em prefix-suffix graph} $G_\sigma$ ({\it cf}.~\cite[Section~3]{Canterini-Siegel:01}). The set of vertices of this labeled directed graph is given by the set of letters $i\in \mathcal{A}$ and there is an edge from $i$ to $j$ labeled by $(p,i,s)\in\mathcal{A}^*\times \mathcal{A}\times \mathcal{A}^*$ if $\sigma(j)=pis$. This edge will be written as $i \xrightarrow{(p,i,s)}j$ or just as $i \xrightarrow{p}j$, since $(p,i,s)$ is completely determined by $j$ and $p$.

Using this graph we can view the subtiles $\mathcal{R}(i)$ as the unique solution of the {\it graph directed iterated function system} (in the sense of Mauldin and Williams~\cite{Mauldin-Williams:88})
\begin{equation}\label{eq:setequation}
\mathcal{R}(i) = \bigcup_{i\xrightarrow{(p,i,s)}j} B_\sigma R(j) + \pi_c\mathbf{l}(p)\qquad (i\in \mathcal{A}).
\end{equation}
Here the union, which is extended over all edges in $G_\sigma$ starting at the vertex $i$ is always essentially disjoint.
This set equation goes back to Sirvent and Wang~\cite{Sirvent-Wang:02}. 

We now define the class of substitutions we will need for our purposes. Let $m\ge 2$ be fixed. To the linear recurrent sequence $(F_k^{(m)})$ given by \eqref{eq:Fmrec} we associate the substitution
\[
\sigma_m(i) = \begin{cases}
1(i+1) & \hbox{for } i < m, \\
1 &\hbox{for } i=m,
\end{cases} 
\]
and denote its incidence matrix by $B_m=B_{\sigma_m}$. By induction we see that
\begin{equation}\label{eq:len}
F_{k}^{(m)} = |\sigma_m^k(1)| \qquad(k\ge 0).
\end{equation}
It is well known that $\sigma_m$ is a unit Pisot substitution for each $m\ge 2$ (see {\it e.g.}~\cite[Section~2.3]{Berthe-Siegel:05}). Thus we may associate a Rauzy fractal  $\mathcal{R}_m=\bigcup_{i\in \mathcal{A}}\mathcal{R}_m(i)$ with it. It has the following properties.

\begin{lemma}\label{lem:over}
Let $m\ge 2$. For each $k\in\N$ define the collection
\[
\mathcal{S}^{(m)}_k = \mathcal{S}_k  =
\left\{
B_\sigma^{k}\mathcal{R}_m(i_k) + \pi_c\mathbf{l}\Big(\sigma_m^{k-1}(p_{k-1})\ldots\sigma_m(p_1)p_0\Big)   \;:\; i_0 \xrightarrow{p_0} \cdots  \xrightarrow{p_{k-1}}i_k
\in G_{\sigma_m} \right\},
\]
where the range reaches over all walks of length $k$ in $G_{\sigma_m}$. 
Then the elements of $\mathcal{S}_k$ overlap only on their boundaries,
\begin{equation}\label{eq:subdiv}
\mathcal{R}_m = \bigcup_{S\in \mathcal{S}_k} S,
\end{equation}
and $\mathcal{R}_m$ forms a fundamental domain of the lattice $\mathcal{L}=\mathcal{L}_m$.
\end{lemma}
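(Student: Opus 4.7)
The plan is to prove the subdivision identity by iterating the graph directed set equation \eqref{eq:setequation}, and to deduce the fundamental domain property from existing results on unit Pisot substitutions by verifying that $\sigma_m$ satisfies the hypotheses needed to invoke them.

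First I would record the algebraic ingredient that makes the iteration clean: since $\pi_c$ projects along the dominant eigenvector $\mathbf{u}$ onto the $B_\sigma$-invariant contracting hyperspace $\mathbf{v}^\bot$, the matrix $B_\sigma$ commutes with $\pi_c$ on $\mathbb{R}^m$. Combined with the intertwining relation $\mathbf{l}\sigma(w) = B_\sigma \mathbf{l}(w)$ this gives $B_\sigma \pi_c \mathbf{l}(w) = \pi_c \mathbf{l}(\sigma(w))$ for any $w \in \mathcal{A}^*$. Iterating yields $B_\sigma^j \pi_c \mathbf{l}(p) = \pi_c \mathbf{l}(\sigma^j(p))$.

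Next I would prove \eqref{eq:subdiv} by induction on $k$. The base case $k=1$ is the union over all $i_0\in\mathcal{A}$ of the set equation \eqref{eq:setequation}, applied to $\sigma_m$. For the inductive step, assume the identity holds at level $k-1$; on each piece $B_\sigma^{k-1}\mathcal{R}_m(i_{k-1}) + \pi_c\mathbf{l}(\sigma_m^{k-2}(p_{k-2})\cdots p_0)$ I apply $B_\sigma^{k-1}$ to \eqref{eq:setequation} and use the intertwining identity to rewrite the resulting translation vector $B_\sigma^{k-1}\pi_c\mathbf{l}(p_{k-1})$ as $\pi_c\mathbf{l}(\sigma_m^{k-1}(p_{k-1}))$. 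Since $\sigma_m$ was chosen with $\sigma_m(1)$ starting with $1$, the walks in $G_{\sigma_m}$ of length $k$ are precisely the concatenations of walks of length $k-1$ with an outgoing edge from $i_{k-1}$, so the resulting union has exactly the claimed index set.

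For the essential disjointness inside $\mathcal{S}_k$, I would again argue by induction. The measure-zero overlap at level $1$ is part of \eqref{eq:setequation}. At the inductive step, two pieces of $\mathcal{S}_k$ that lie inside the same piece of $\mathcal{S}_{k-1}$ are images under the linear map $B_\sigma^{k-1}$ (restricted to $\mathbf{v}^\bot$, on which it is invertible) of two distinct subtiles appearing in the $k=1$ decomposition, hence overlap only on their boundaries; pieces of $\mathcal{S}_k$ lying in different pieces of $\mathcal{S}_{k-1}$ inherit essential disjointness directly from the induction hypothesis, using that each $\mathcal{R}_m(i)$ has zero-measure boundary (quoted from \cite{BST:10}) and that $B_\sigma^{k-1}$ is linear.

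Finally, for the fundamental domain claim, I would invoke \cite[Theorems~1.2 and~1.3]{Ito-Rao:06}: this reduces the problem to checking that $\sigma_m$ is a unit Pisot substitution (already recorded via \cite[Section~2.3]{Berthe-Siegel:05}) and satisfies the super coincidence condition. The main obstacle of the proof is precisely this verification, but for the $m$-bonacci family it is known in the literature; I would either cite the explicit verification (e.g.\ via the combinatorial criterion involving the prefix-suffix graph) or observe that the substitutions $\sigma_m$ are of ``Arnoux--Rauzy'' type, for which super coincidence has been established. Once this is in hand, the tiling property of $\mathcal{R}_m$ modulo $\mathcal{L}_m$ follows immediately.
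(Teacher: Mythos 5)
Your proposal takes essentially the same route as the paper: the subdivision \eqref{eq:subdiv} is obtained by iterating the set equation \eqref{eq:setequation} $k$ times (your intertwining identity $B_{\sigma}^{j}\pi_c\mathbf{l}(p)=\pi_c\mathbf{l}(\sigma_m^{j}(p))$ just makes this explicit), while the essential disjointness of the subtiles and the fundamental-domain property are quoted from \cite{Ito-Rao:06}, where Example~3.1 together with Theorem~1.2 covers the $m$-bonacci substitutions directly, so the super-coincidence verification you flag as the main obstacle is already available in exactly the reference the paper uses. One small caveat: at level $1$ the measure-disjointness of pieces coming from \emph{different} subtiles $\mathcal{R}_m(i_0)$ is not contained in \eqref{eq:setequation} itself but follows from the pairwise essential disjointness of the $\mathcal{R}_m(i)$, i.e.\ from that same Ito--Rao citation, which you do have at your disposal.
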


The elements of $\mathcal{S}_k$ are called the {\em level $k$ subtiles} of $\mathcal{R}_m$.

\begin{proof}
It follows from \cite[Example~3.1 and Theorem~1.2]{Ito-Rao:06} that the subtiles $\mathcal{R}_m(i)$, $i\in\mathcal{A}$, are essentially disjoint and that their union $\mathcal{R}_m$ is a fundamental domain of the lattice $\mathcal{L}=\mathcal{L}_m$ in $\mathbf{v}^\bot=\mathbf{v}_m^\bot$ (see also~\cite{Akiyama:02}). Using the disjointness of the subtiles and iterating \eqref{eq:setequation} for $k$ times we get \eqref{eq:subdiv}, where the union has no essential overlaps.
\end{proof}

The prefix-suffix graph $G_{\sigma_m}$ of $\sigma_m$ is easy to calculate and a drawing of it is provided in Figure~\ref{fig:prefixsuffix}.

\begin{figure}[h] 
\hskip 0.8cm \xymatrix{
&&&&&&\\
&&&&&&\\
*++[o][F]{1}
\ar@/^4ex/[rr]^(0.7){(\epsilon,1,4)} 
\ar@/^10ex/[rrrr]^(0.7){(\epsilon,1,m)}
\ar@/^16ex/[rrrrr]^(0.7){(\epsilon,1,\epsilon)}
\ar@(lu,u)[]^{(\epsilon,1,2)}  \ar[r]_(0.5){(\epsilon,1,3)} &*++[o][F]{2} \ar@/^7ex/[l]^{(1,2,\epsilon)}&*++[o][F]{3}\ar@/^7ex/[l]^{(1,3,\epsilon)}&....&*++[o][F]{m-1} \ar@/^7ex/[l]^{(1,m-1,\epsilon)}&*++[o][F]{m} \ar@/^7ex/[l]^{(1,m,\epsilon)}
}

\caption{The prefix-suffix graph $G_{\sigma_m}$ of $\sigma_m$ ($\epsilon$ denotes the empty word). \label{fig:prefixsuffix}}
\end{figure}
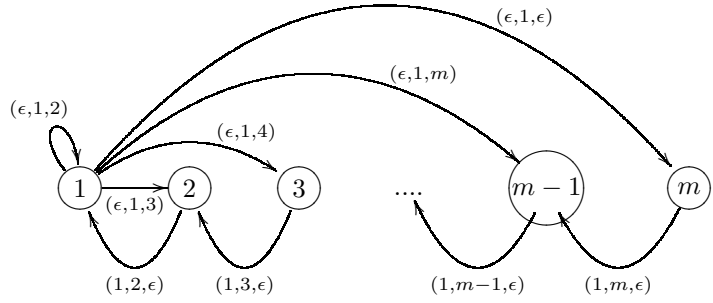

We equip $\mathbf{v}_m^\bot$ with the Lebesgue measure $\lambda_{\mathbf{v}_m}$ that is normalized in a way that a fundamental mesh of $\mathcal{L}_m$ has measure $1$. Using \cite[Equation~(3.3)]{Ito-Rao:06} it is easy to see that 
\begin{equation}\label{eq:Rmeasure}
\lambda_{\mathbf{v}}(\mathcal{R}_m(i)) = \varphi_m^{-i} \qquad (1\le i \le m).
\end{equation}

In what follows, whenever convenient we will write $\mathcal{L}=\mathcal{L}_m$, $\mathbf{v}=\mathbf{v}_m$, $\pi_c=\pi_{c,m}$, {\em etc.}\ to emphasize for which $m$ we use the object in question.

\section{$\beta$-adic van der Corput sequences and torus rotations}

The  main result of this section relates the $\varphi_m$-adic van der Corput sequence to a rotation on an $(m-1)$-dimensional torus. In its statement and in the sequel we will write $|A|$ for the Lebesgue measure of a measurable set $A\subset \R$.

\begin{proposition}\label{prop:makeRotation}
Fix $m\ge 2$, let $(F_j^{(m)})_{j \ge 0}$ be the $m$-bonacci sequence with dominant root $\varphi_m$, and choose $k\in\mathbb{N}$ arbitrary. For $n\in\N$ let 
\begin{equation}\label{eq:mborep}
n=\sum_{j\ge 0} \varepsilon_j(n) F_j^{(m)}
\end{equation}
be the expansion of $n$ in the linear recurrent number system associated with $(F_j^{(m)})$. Let $r\in\{0,\ldots,m-1\}$ be given in a way that $\varepsilon_{k-r-1}(n)=0$, $\varepsilon_{k-r}(n)=\cdots=\varepsilon_{k-1}(n)=1$. 
Then the following assertions hold.
\begin{enumerate}
\item  \label{it:Os1} For 
\[
\mu_k=\sum_{j=0}^{k-1} \varepsilon_{k-1-j}(n)\varphi_m^{j}
\]
we have that
\[
V_{\varphi_m}(n) \in
\left[ 
\frac{\mu_k}{\varphi_m^k},\frac{\mu_k+\varphi_m^r-\sum_{i=0}^{r-1}\varphi_m^{i}}{\varphi_m^k}
\right).
\]

\item \label{it:Os2} For 
\[
\nu_k = \sum_{j=0}^{k-1} \varepsilon_j(n)F_j^{(m)} 
\]
we have that
\[
n\pi_c({\mathbf{e}_1}) \in \nu_k\pi_c({\mathbf{e}_1}) + B_m^k \bigcup_{i=1}^{m-r} \mathcal{R}_m(i) + \mathcal{L}_m.
\]
\end{enumerate}
The measures of the occurring sets agree, {\em i.e.}, we have
\begin{equation}\label{eq:measureequality}
\left| 
\left[ 
\frac{\mu_k}{\varphi_m^k},\frac{\mu_k+\varphi_m^r-\sum_{i=0}^{r-1}\varphi_m^{i}}{\varphi_m^k}
\right)
\right|
=
\lambda_{\mathbf{v}}\left(
\nu_k\pi_c({\mathbf{e}_1}) + B_m^k \bigcup_{i=1}^{m-r} \mathcal{R}_m(i)
\right).
\end{equation}
\end{proposition}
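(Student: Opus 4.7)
The three assertions will be handled in turn. Parts (1) and (3) are essentially computations; the substantive content sits in (2), which relates the $\varphi_m$-adic expansion of $n$ to the Rauzy fractal via the Dumont--Thomas numeration.

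For part (1), I plan to decompose $V_{\varphi_m}(n) = \mu_k/\varphi_m^k + T_k(n)$ with $T_k(n) = \sum_{j\ge k}\eps_j(n)\varphi_m^{-j-1}$; the lower bound is immediate from $T_k(n)\ge 0$. For the upper bound, the admissibility condition \eqref{eq:lex} for the $m$-bonacci recurrence forbids any window of $m$ consecutive $1$'s in the digit string; combined with the $r$ forced $1$'s at positions $k-r,\ldots,k-1$, this forces at least one of $\eps_k(n),\ldots,\eps_{k+m-r-1}(n)$ to be $0$. Hence the supremum of $T_k(n)$ over admissible tails is realized in the limit by the pattern $1^{m-r-1}0(1^{m-1}0)^{\infty}$ starting at position $k$, and a direct calculation that only uses the identity $\sum_{i=1}^{m}\varphi_m^{-i}=1$ (the defining equation of $\varphi_m$) evaluates this supremum to $\sum_{j=1}^{m-r}\varphi_m^{-k-j}$. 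The same identity yields $\varphi_m^r - \sum_{i=0}^{r-1}\varphi_m^i = \sum_{j=1}^{m-r}\varphi_m^{-j}$, matching the stated upper endpoint; strictness follows because no finite admissible tail attains the supremum.

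For part (2), the key tool is the Dumont--Thomas/Canterini--Siegel numeration (see \cite{Canterini-Siegel:01}), which encodes every $n\in\N$ by a unique walk $i_0 \xrightarrow{p_0} \cdots \xrightarrow{p_{K-1}} i_K=1$ in $G_{\sigma_m}$ with $u_1\cdots u_n = \sigma_m^{K-1}(p_{K-1})\cdots\sigma_m(p_1)\,p_0$. Figure~\ref{fig:prefixsuffix} shows that every outgoing edge from vertex $1$ carries prefix $\epsilon$, while every vertex $k\ge 2$ has the unique outgoing edge $k\to k-1$ with prefix $1$; consequently $|\sigma_m^j(p_j)| = F_j^{(m)}\cdot[i_j\ge 2]$, and the uniqueness of both the walk and the $m$-bonacci expansion identifies $\eps_j(n) = [i_j\ge 2]$. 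Using $\pi_c\mathbf{l}(\sigma_m^j(1)) = B_m^j\pi_c\mathbf{e}_1$ together with the elementary identity $\mathbf{l}(w) - |w|\mathbf{e}_1 \in \Z\text{-span}\{\mathbf{e}_1-\mathbf{e}_i\colon i=2,\ldots,m\}$ valid for every word $w\in\mathcal{A}^*$, I obtain $n\pi_c\mathbf{e}_1 \equiv \pi_c\mathbf{l}(u_1\cdots u_n)\pmod{\mathcal{L}_m}$, and by Lemma \ref{lem:over} this point lies in the level-$k$ subtile $B_m^k\mathcal{R}_m(i_k) + \pi_c\mathbf{l}(\sigma_m^{k-1}(p_{k-1})\cdots p_0)$, whose shift is $\equiv\nu_k\pi_c\mathbf{e}_1\pmod{\mathcal{L}_m}$ (by the same observation applied to the truncated word, which has length $\nu_k$). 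Finally, the hypothesis translates to $i_{k-r-1}=1$ and $i_{k-r},\ldots,i_{k-1}\ge 2$: starting from $i_{k-r-1}=1$ one freely picks an outgoing edge $1\to i_{k-r}=a\in\{1,\ldots,m\}$, after which the walk deterministically continues $a\to a-1\to\cdots\to a-r=i_k$, and the requirement $i_{k-1}=a-r+1\ge 2$ forces $a\in\{r+1,\ldots,m\}$, hence $i_k\in\{1,\ldots,m-r\}$, as claimed.

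Part (3) follows at once: the interval in (1) has length $\sum_{j=1}^{m-r}\varphi_m^{-k-j}$ by the identity above; meanwhile, \eqref{eq:Rmeasure} together with $|\det B_m|=1$ and dominant eigenvalue $\varphi_m$ (so that $B_m|_{\mathbf{v}^\bot}$ contracts $\lambda_{\mathbf{v}}$ by the factor $\varphi_m^{-1}$) yields $\lambda_{\mathbf{v}}(B_m^k\mathcal{R}_m(i)) = \varphi_m^{-k-i}$, and essential disjointness of the subtiles (Lemma \ref{lem:over}) plus summation over $i=1,\ldots,m-r$ give the same value. The main obstacle throughout is the Dumont--Thomas dictionary in part (2): one must translate the arithmetic constraint on the run of $1$'s in the $m$-bonacci expansion of $n$ into the graph-theoretic constraint that pins the terminal vertex $i_k$ of the level-$k$ walk to exactly $\{1,\ldots,m-r\}$. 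Once this dictionary and the algebraic identity $\varphi_m^r - \sum_{i=0}^{r-1}\varphi_m^i = \sum_{j=1}^{m-r}\varphi_m^{-j}$ are in place, everything else reduces to routine bookkeeping.
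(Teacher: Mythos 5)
Your proposal is correct and follows essentially the same route as the paper: the $m$-bonacci digits are matched with the prefixes $p_j=1^{\varepsilon_j(n)}$ of a walk in $G_{\sigma_m}$ (Dumont--Thomas numeration), abelianization gives $n\pi_c(\mathbf{e}_1)\equiv\pi_c\mathbf{l}\big(\sigma_m^{K-1}(p_{K-1})\cdots\sigma_m(p_1)p_0\big)\pmod{\mathcal{L}_m}$ and likewise for $\nu_k$, the structure of the graph pins the terminal vertex to $i_k\in\{1,\ldots,m-r\}$, and the measure identity \eqref{eq:measureequality} follows from \eqref{eq:Rmeasure} together with the contraction factor $\varphi_m^{-1}$ of $B_m$ on $\mathbf{v}^\bot$. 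The only cosmetic difference is that the paper locates $\pi_c\mathbf{l}(u_1\cdots u_n)$ in the level-$k$ subtile by iterating the set equation \eqref{eq:setequation} ad infinitum, whereas you attribute this to Lemma~\ref{lem:over} (which alone only gives the subdivision); your Dumont--Thomas decomposition supplies the missing step in the standard way.
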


\begin{proof}
To show item (\ref{it:Os1}) just note that according to \eqref{eq:lex} (see also \cite[Theorem~1]{Petho-Tichy:89}) the language of possible digit strings  $\ldots \varepsilon_2(n)\varepsilon_1(n)\varepsilon_0(n)$ in \eqref{eq:mborep} is given by the set of finite words in $\{0,1\}^{*}$ in which blocks of $m$ consecutive occurrences of $1$ are forbidden (here we cut leading zeros to get finite sequences). The assertion then follows by easy calculations using the fact that $\varphi_m^m=\varphi_m^{m-1}+\cdots+\varphi_m+1$.

To prove item (\ref{it:Os2}) first consider the labels of a walk in the prefix-suffix graph $G_{\sigma_m}$. According to Figure~\ref{fig:prefixsuffix} we see that the prefixes of these labels are either $\epsilon$ (the empty word) or $1$. Moreover, each sequence of prefixes can occur in the labeling of a walk subject to the condition that it contains no block of $m$ prefixes $1$. Looking at the lengths of the prefixes, this reflects the restriction on the language of digit strings for the linear recurrent sequence $(F_j^{(m)})$. More precisely, if $n$ is given as in \eqref{eq:mborep} then there is a (unique) walk 
$
w:i_0\xrightarrow{p_0}i_1\xrightarrow{p_1}\cdots 
$
in the prefix-suffix graph such that $p_j=1^{\eps_j(n)}$ ($j\in\N$) and vice versa. 
For this walk $w$, using \eqref{eq:len} we get
\begin{equation}\label{eq:DTn}
\begin{array}{rl}
\displaystyle
n&\displaystyle = \sum_{j\ge 0} \varepsilon_j(n) F_j^{(m)} = \sum_{j\ge 0} \varepsilon_j(n) |\sigma_m^j(1)| = \sum_{j\ge 0} |\sigma_m^j(1^{\varepsilon_j(n)})|=\sum_{j\ge 0} |\sigma_m^j({p_j})| \\[15pt]
&\displaystyle = |    \ldots  \sigma_m^2(p_2)\sigma_m(p_1)p_0 |
\end{array}
\end{equation}
(note that since the digits $\eps_j(n)$ are eventually $0$, the prefixes $p_j$ are eventually empty so that we can truncate the sums to finite sums in this formula).
Thus $\ldots  \sigma_m^2(p_2)\sigma_m(p_1)p_0$ is a word of length $n$ and since by the definition of $\mathcal{L}_m$ we have 
\begin{equation*}
\pi_c \mathbf{l}(v_1\ldots v_n) = \pi_c \sum_{i=1}^n{\mathbf e}_{v_i} \equiv n \pi_c(\mathbf{e}_1) \pmod{\mathcal{L}_m}
\end{equation*} 
for each word $v_1\ldots v_n \in \mathcal{A}^n$ we gain
\begin{equation}\label{eq:nequiv}
n\pi_c(\mathbf{e}_1) \equiv \pi_c\mathbf{l}\Big(\ldots  \sigma_m^2(p_2)\sigma_m(p_1)p_0\Big) \pmod{\mathcal{L}_m}.
\end{equation}
In the same way we derive that
\begin{equation}\label{eq:nequiv2}
\nu_k\pi_c(\mathbf{e}_1) \equiv \pi_c\mathbf{l} \Big(\sigma_m^{k-1}(p_{k-1}) \ldots \sigma_m(p_1)p_0\Big) \pmod{\mathcal{L}_m}.
\end{equation}

We need to identify the level $k$ subtile of $\mathcal{R}_m$ containing a representative of $n\pi_c(\mathbf{e}_1)$ modulo $\mathcal{L}_m$.
Since $B_m$ acts as a uniform contraction on $\mathbf{v}^\bot$ we have $B_m^{\ell}\mathcal{R}_m(i) \to \{\mathbf{0}\}$ for $\ell\to\infty$ in Hausdorff metric. Thus, iterating the set equation \eqref{eq:setequation} {\it ad infinitum} for each $j_0\in \mathcal{A}$ and taking the union over $j_0\in\mathcal{A}$ we get
\begin{equation}\label{eq:adinfinitum}
\mathcal{R}_m =  \bigcup_{j_0\xrightarrow{q_0} j_1\xrightarrow{q_{1}}\cdots}  \pi_c\mathbf{l} \Big(\ldots \sigma_m^{2}(q_{2}) \sigma_m(q_1)q_0\Big),
\end{equation}
where the union runs over all infinite walks of $G_{\sigma_m}$. The walk $w$ corresponding to the expansion \eqref{eq:DTn} of $n$ starts with $w_k:i_0\xrightarrow{p_0}i_1\xrightarrow{p_1}\cdots\xrightarrow{p_{k-1}}i_k$ and occurs in the range of the union in \eqref{eq:adinfinitum}. Thus we restrict this union to the infinite walks starting with $w_k$. Doing this we produce the subtile of level $k$ corresponding to the first $k$ digits of $n$. In particular, we get again from the set equation \eqref{eq:setequation} that
\begin{equation}\label{eq:subn}
\begin{array}{l}
\displaystyle B_m^{k}\mathcal{R}_m(i_k) + \pi_c\mathbf{l} \Big(\sigma_m^{k-1}(p_{k-1}) \ldots \sigma_m(p_1)p_0\Big) = \\[8pt]
 \displaystyle \hskip 1.8cm \bigcup_{i_k=j_k\xrightarrow{q_k} j_{k+1}\xrightarrow{q_{k+1}}\cdots}  \pi_c\mathbf{l} \Big(\ldots \sigma_m^{k+1}(q_{k+1})\sigma_m^{k}(q_{k})\sigma_m^{k-1}(p_{k-1}) \ldots \sigma_m(p_1)p_0\Big),
\end{array}
\end{equation}
and the element $\pi_c\mathbf{l}\big(\ldots  \sigma_m^2(p_2)\sigma_m(p_1)p_0\big)$ is contained in this union for some $i_k\in\mathcal{A}$.
We inspect the prefix-suffix graph $G_{\sigma_m}$ in Figure~\ref{fig:prefixsuffix} to specify the possible values of $i_k$. Since $p_{k-r-1}=\eps_{k-r-1}(n)=0$ and $p_{k-r}=\eps_{k-r}(n)=\cdots = p_{k-1}=\eps_{k-1}(n)=1$ we see that $i_k\in\{1,\ldots, m-r\}$, as the other vertices cannot be accessed by a walk ending with a block of prefixes $1$ of length $r$ in $G_{\sigma_m}$. 
Thus
\begin{equation}\label{eq:fastfertig}
\pi_c\mathbf{l}\Big(\ldots  \sigma_m^2(p_2)\sigma_m(p_1)p_0\Big) \in B_m^{k}\mathcal{R}_m(i_k) + \pi_c\mathbf{l} \Big(\sigma_m^{k-1}(p_{k-1}) \ldots \sigma_m(p_1)p_0\Big)
\end{equation}
holds for some $i_k\in\{1,\ldots, m-r\}$. The result in item\eqref{it:Os2} now follows by inserting \eqref{eq:nequiv} and \eqref{eq:nequiv2} in \eqref{eq:fastfertig}.
%

To show \eqref{eq:measureequality} we use \eqref{eq:Rmeasure} together with the fact that $\lambda_\mathbf{v}(B_m A) = \varphi_m^{-1}\lambda_\mathbf{v}(A)$ holds for each measurable $A\subset \mathbf{v}^{\bot}$.
\end{proof}

\begin{remark}\label{rem:DTnum}
The representation of $n$ in terms of lengths of prefixes under powers of $\sigma_m$ in \eqref{eq:DTn} is called the {\em Dumont-Thomas expansion} of $n$. Dumont-Thomas expansions are introduced in~\cite{Dumont-Thomas:89,Dumont-Thomas:93}. As mentioned in Rigo and Steiner~\cite[p.~284]{Rigo-Steiner:05}, an abstract numeration system in the sense of \cite{Lecomte-Rigo:01,Lecomte-Rigo:02} corresponding to  a primitive automaton leads to Dumont-Thomas numeration for a primitive substitution.
\end{remark}

We will need the following tiling result of the intervals  in Proposition~\ref{prop:makeRotation}~(\ref{it:Os1}). 

\begin{lemma}\label{lem:inttes}
Fix $m\ge 2$ and $k \in \N$. To each $n<F_k^{(m)}$ associate $r=r(n)$ and $\mu_k=\mu_k(n)=\sum_{j=0}^{k-1} \varepsilon_{k-1-j}(n)\varphi_m^{j}$ as in Proposition~\ref{prop:makeRotation}. Then the collection of intervals
\[
\mathcal{C}_k^{(m)}=\mathcal{C}_k=\left\{
\left[ 
\frac{\mu_k(n)}{\varphi_m^k},\frac{\mu_k(n)+\varphi_m^{r(n)}-\sum_{i=0}^{r(n)-1}\varphi_m^{i}}{\varphi_m^k}
\right)\;:\; 0\le n < F_k^{(m)}
\right\}
\]
forms a partition of the interval $[0,1)$.
\end{lemma}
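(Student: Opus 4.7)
My plan is to prove the lemma by induction on $k$, showing at each step that every level-$(k-1)$ interval in $\mathcal{C}_{k-1}$ is cleanly partitioned by the level-$k$ intervals corresponding to its ``children''. The base case $k=1$ is a direct calculation: $n\in\{0,1\}$ produce the intervals $[0,\varphi_m^{-1})$ and $[\varphi_m^{-1},1)$ respectively, which partition $[0,1)$.

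For the inductive step, I would first observe that every $n<F_k^{(m)}$ decomposes uniquely as $n=n'+\varepsilon_{k-1}(n)F_{k-1}^{(m)}$ with $n'<F_{k-1}^{(m)}$, yielding a parent map $n\mapsto n'$. The preimage of a given $n'$ always contains $n=n'$ (with $\varepsilon_{k-1}(n)=0$ and hence $r^{(k)}(n)=0$), and, whenever $r':=r^{(k-1)}(n')<m-1$, also contains $n=n'+F_{k-1}^{(m)}$ (for which the top block of $1$'s is extended by one, so $r^{(k)}(n)=r'+1$). Writing $I_n^{(k)}$ for the interval in $\mathcal{C}_k$ associated with $n$ and using the recursion $\mu_k(n)=\varepsilon_{k-1}(n)+\varphi_m\mu_{k-1}(n')$ (immediate from the definition of $\mu_k$), one checks that $I_{n'}^{(k)}$ shares its left endpoint with $I_{n'}^{(k-1)}$ and has length $\varphi_m^{-k}$, while $I_{n'+F_{k-1}^{(m)}}^{(k)}$ (when present) begins precisely where $I_{n'}^{(k)}$ ends.

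Setting
\[
S_r^{(k)}=\varphi_m^{-k}\Bigl(\varphi_m^r-\sum_{i=0}^{r-1}\varphi_m^i\Bigr)
\]
for the length of a level-$k$ interval with parameter $r$, the algebraic heart of the argument is the uniform identity
\[
S_0^{(k)}+S_{r+1}^{(k)}=S_r^{(k-1)}\qquad(0\le r\le m-1),
\]
adopted with the convention $S_m^{(k)}:=0$. This is a short computation from the definition of $S_r^{(k)}$ and the trivial relation $\varphi_m\cdot\varphi_m^i=\varphi_m^{i+1}$, while the convention $S_m^{(k)}=0$ is forced by the $m$-bonacci recurrence $\varphi_m^m-\sum_{i=0}^{m-1}\varphi_m^i=0$. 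For $r'<m-1$ the identity says that the two children fit together without gap or overlap and together tile $I_{n'}^{(k-1)}$; for the boundary case $r'=m-1$ it degenerates to $S_0^{(k)}=S_{m-1}^{(k-1)}$, which expresses that the inadmissible second child contributes zero length and the unique surviving child $I_{n'}^{(k)}$ already exactly covers $I_{n'}^{(k-1)}$. Summing these local subdivisions over all $n'<F_{k-1}^{(m)}$ and invoking the inductive hypothesis for $\mathcal{C}_{k-1}$ then gives the desired partition by $\mathcal{C}_k$.

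The main obstacle is the bookkeeping around the boundary case $r'=m-1$, where the number of children drops from two to one, combined with checking that prepending a $1$ at position $k-1$ really does increment $r$ by exactly one. I handle this by absorbing the case distinction into the single uniform identity above via the convention $S_m^{(k)}=0$; once that is set up, the entire inductive step reduces to the one-line algebraic identity, and there is no need for a separate analysis of the extremal case.
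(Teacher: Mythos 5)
Your induction is correct: the unique decomposition $n=n'+\varepsilon_{k-1}(n)F_{k-1}^{(m)}$ with its one- or two-element fibers, the endpoint recursion $\mu_k(n)=\varepsilon_{k-1}(n)+\varphi_m\mu_{k-1}(n')$, and the length identity $S_0^{(k)}+S_{r+1}^{(k)}=S_r^{(k-1)}$ (with $S_m^{(k)}=0$ forced by $\varphi_m^m=\varphi_m^{m-1}+\cdots+\varphi_m+1$) all check out, and together they give exactly the asserted partition of $[0,1)$. The paper disposes of this lemma with the single remark that it ``follows easily by direct calculation using $\varphi_m^m=\varphi_m^{m-1}+\cdots+\varphi_m+1$'', so your argument is essentially that direct calculation carried out in full, organized as an induction on $k$.
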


\begin{proof}
This follows easily by direct calculation using  $\varphi_m^m=\varphi_m^{m-1}+\cdots+\varphi_m+1$.
\end{proof}

Since by Lemma~\ref{lem:over} the subtiles of level $k$ of $\mathcal{R}_m$ overlap on the boundary, we will need a slightly shifted rotation that always hits the interior of the subtiles.

\begin{corollary}\label{cor:shifted}
Fix $m\ge 2$, let $(F_j^{(m)})_{j \ge 0}$ be the $m$-bonacci sequence with dominant root $\varphi_m$, and choose $k,N\in\mathbb{N}$ arbitrary. For $0\le n < N$ let 
\begin{equation*}
n=\sum_{j\ge 0} \varepsilon_j(n) F_j^{(m)}
\end{equation*}
be the expansion of $n$ in the linear recurrent number system associated with $(F_j^{(m)})$. Let $r\in\{0,\ldots,m-1\}$ be given in a way that $\varepsilon_{k-r-1}(n)=0$, $\varepsilon_{k-r}(n)=\cdots=\varepsilon_{k-1}(n)=1$. 
Then there is a constant $\alpha=\alpha(k,N)\in \mathbf{v}^{\bot}$ depending only on $k$ and $N$ such that
\[
n\pi_c({\mathbf{e}_1}) + \alpha \;\in\; \nu_k\pi_c({\mathbf{e}_1}) +  \bigcup_{i=1}^{m-r} {\rm int}\left( B_m^k\mathcal{R}_m(i)\right) + \mathcal{L}_m \quad (0\le n < N)
\]
with $\nu_k$ as in Proposition~\ref{prop:makeRotation}.
\end{corollary}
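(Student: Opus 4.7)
The plan is to derive Corollary~\ref{cor:shifted} from Proposition~\ref{prop:makeRotation}(\ref{it:Os2}) by a small generic perturbation argument. For each fixed $0\le n<N$, Proposition~\ref{prop:makeRotation}(\ref{it:Os2}) gives
\[
n\pi_c(\mathbf{e}_1) - \nu_k\pi_c(\mathbf{e}_1) \in T_n + \mathcal{L}_m,
\qquad
T_n:=\bigcup_{i=1}^{m-r(n)} B_m^k\mathcal{R}_m(i),
\]
but $T_n$ is closed and its subtile pieces can share boundary points. I want to shift by a single $\alpha\in\mathbf{v}^\bot$ (depending on $k$ and $N$ only) so that the above membership improves to landing in $\mathrm{int}(T_n)+\mathcal{L}_m$ for \emph{every} such $n$.

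The key observation is that the set of "bad" shifts for a given $n$ is a Lebesgue null set in $\mathbf{v}^\bot$. Indeed, define
\[
B_n := \bigl\{\alpha\in\mathbf{v}^\bot \;:\; n\pi_c(\mathbf{e}_1)+\alpha \notin \nu_k\pi_c(\mathbf{e}_1) + \bigcup_{i=1}^{m-r(n)}\mathrm{int}(B_m^k\mathcal{R}_m(i)) + \mathcal{L}_m\bigr\}.
\]
By Proposition~\ref{prop:makeRotation}(\ref{it:Os2}) we know $n\pi_c(\mathbf{e}_1)-\nu_k\pi_c(\mathbf{e}_1)\in T_n+\mathcal{L}_m$, so membership in $B_n$ forces the shifted point to lie on the boundary of some translate $B_m^k\mathcal{R}_m(i)+\ell$ with $\ell\in\mathcal{L}_m$. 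Hence $B_n$ is contained in a countable union of translates of the sets $\partial(B_m^k\mathcal{R}_m(i))=B_m^k(\partial \mathcal{R}_m(i))$. Since each $\partial \mathcal{R}_m(i)$ has $\lambda_{\mathbf{v}}$-measure zero (as noted after the definition of the subtiles), the linear map $B_m^k$ preserves this property, and countable unions of null sets are null, we conclude $\lambda_{\mathbf{v}}(B_n)=0$.

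Now $r(n)$ and $\nu_k$ depend on $n$ only through the first $k$ digits of its greedy expansion, so as $n$ ranges over $\{0,\ldots,N-1\}$ the total collection of relevant pairs $(r(n),\nu_k(n))$ is finite; in particular the union $\bigcup_{n=0}^{N-1} B_n$ is a finite union of null sets and thus itself a null set in $\mathbf{v}^\bot$. Consequently its complement is non-empty (in fact of full measure), and any $\alpha$ chosen from this complement satisfies the required inclusion simultaneously for all $0\le n<N$. We set $\alpha=\alpha(k,N)$ to be any such choice, which by construction depends only on $k$ and $N$. The only delicate point in the plan is justifying that $\lambda_{\mathbf{v}}(B_n)=0$, which rests on the fact recalled in Section~2 that the Rauzy subtiles have topological boundary of Lebesgue measure zero; everything else is a finite-union measure-theoretic argument.
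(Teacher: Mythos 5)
Your key claim --- that the ``bad'' set $B_n$ is a Lebesgue null set --- is false, and the argument collapses there. Proposition~\ref{prop:makeRotation}(\ref{it:Os2}) tells you that the \emph{unshifted} point $(n-\nu_k)\pi_c(\mathbf{e}_1)$ lies in the closed set $T_n+\mathcal{L}_m$; it says nothing about $(n-\nu_k)\pi_c(\mathbf{e}_1)+\alpha$ for an arbitrary $\alpha\in\mathbf{v}^\bot$. After adding a generic shift the point can leave $T_n+\mathcal{L}_m$ entirely and land in the interior of a level~$k$ piece of the tiling that is not among the translates $\nu_k\pi_c(\mathbf{e}_1)+B_m^k\mathcal{R}_m(i)+\mathcal{L}_m$, $1\le i\le m-r$; such an $\alpha$ belongs to $B_n$ but the shifted point is on no subtile boundary. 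In fact, modulo $\mathcal{L}_m$ the target set has measure $\sum_{i=1}^{m-r}\varphi_m^{-k-i}$, which is small for large $k$, so $B_n$ is the lattice-periodic complement of a small-measure set: it has nearly \emph{full} measure per fundamental domain, not measure zero. Consequently the finite-union step cannot produce a common good $\alpha$ either: the good sets $G_n=\mathbf{v}^\bot\setminus B_n$ have measure $O(\varphi_m^{-k})$ each, and no measure-counting argument shows that $\bigcap_{0\le n<N}G_n\neq\emptyset$.

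What is missing is precisely the geometric input the paper uses. Put $F_{L-1}^{(m)}\le N-1<F_L^{(m)}$ and $M=\max(k,L)$, write each $n<N$ via its Dumont--Thomas expansion $n=\sum_{j=0}^{M-1}|\sigma_m^j(p_j)|$, and observe that since $i=1$ always occurs in the union $\bigcup_{i=1}^{m-r}$, every walk $i_0\xrightarrow{p_0}\cdots\xrightarrow{p_{M-1}}1$ ending in the vertex $1$ is admissible; combined with the set equation \eqref{eq:setequation} and \eqref{eq:nequiv}, \eqref{eq:subn} this yields the \emph{uniform} containment $n\pi_c(\mathbf{e}_1)+B_m^{M}\mathcal{R}_m(1)\subset\nu_k\pi_c(\mathbf{e}_1)+B_m^k\bigcup_{i=1}^{m-r}\mathcal{R}_m(i)+\mathcal{L}_m$ for all $0\le n<N$. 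Only after restricting $\alpha$ to the positive-measure set $B_m^{M}\mathcal{R}_m(1)$ --- which guarantees that every shifted point stays inside the closed target union --- does your null-set reasoning become correct: within $B_m^{M}\mathcal{R}_m(1)$ the shifts for which some $n\pi_c(\mathbf{e}_1)+\alpha$ hits a subtile boundary form a finite union of null sets (boundaries of $\mathcal{R}_m(i)$ have $\lambda_{\mathbf{v}}$-measure zero), so a suitable $\alpha=\alpha(k,N)$ exists. Your measure-theoretic finish is fine as a last step, but without the containment of the level~$M$ subtile it has no valid starting point.
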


\begin{proof}
Let $L$ be defined in a way that $F_{L-1}^{(m)}\le N - 1 < F_{L}^{(m)}$ and $M= \max(k,L)$. Then, arguing as in \eqref{eq:DTn}, the expansion of each $n< N$ can be written as
\[
n=\sum_{j= 0}^{M-1} \varepsilon_j(n) F_j^{(m)} = \sum_{j= 0}^{M-1} |\sigma_m^j(p_j)|
\] 
for some walk labelled by $p_0,\ldots, p_{M-1}$ in the prefix-suffix graph $G_{\sigma_m}$. 
From Proposition~\ref{prop:makeRotation} we know that  
\[
n \pi_c(\mathbf{e}_1)\equiv \pi_c\mathbf{l}\Big(\sigma_m^{M-1}(p_{M-1})\ldots \sigma_m(p_1)p_0\Big)  \;\in\; \nu_k\pi_c{\mathbf{e}_1} + B_m^k \bigcup_{i=1}^{m-r} \mathcal{R}_m(i) + \mathcal{L}_m.
\]
Since $r\in\{0,\ldots,m-1\}$, $i=1$ is always in the range of the union on the right hand side,  by the set equation \eqref{eq:setequation} this right hand side contains all elements
$
\pi_c\mathbf{l}\big(\ldots \sigma_m^{M-1}(p_{M-1})\ldots  \sigma_m(p_1)p_0\big)
$
corresponding to a walk $w$ in the prefix-suffix automaton starting with 
$
i_0 \xrightarrow{p_0} \cdots  \xrightarrow{p_{M-2}}i_{M-1}\xrightarrow{p_{M-1}}1.
$
However, by \eqref{eq:nequiv} and \eqref{eq:subn} this is equivalent to 
\[
n \pi_c(\mathbf{e}_1) + B_m^{M}\mathcal{R}_m(1) \;\subset\; \nu_k\pi_c{\mathbf{e}_1} + B_m^k \bigcup_{i=1}^{m-r} \mathcal{R}_m(i) + \mathcal{L}_m \qquad(0\le n < N).
\]
Since $B_m^{M}\mathcal{R}_m(1)$ has positive measure, while the boundaries of $\mathcal{R}_m(i)$, $i\in \mathcal{A}$, have zero measure we can choose $\alpha\in  B_m^{M}\mathcal{R}_m(1) $ in a way that $n \pi_c(\mathbf{e}_1) + \alpha$ avoids the boundaries of the subtiles for every $n\in \{0,\ldots,N-1\}$ and the result follows.
\end{proof}

\section{Discrepancy estimates}	
	
Using the results of the previous section we will express the discrepancy of $\boldsymbol{\beta}$-adic Halton sequences in terms of local discrepancies of rotations. We first state a generalization of \cite[Lemma~6]{Drmota:15} on the discrepancy of $\varphi_m$-adic van der Corput sequences that is of interest in its own right.

\begin{lemma}\label{lem:local1}
Fix $m\ge 2$, let $N\in \N$ be given, and choose $L$ in a way that $F_{L-1}^{(m)}\le N-1 < F_{L}^{(m)}$. Then
\[
D_N( (V_{\varphi_m}(n))_{n\ge 0} ) \ll \frac{1}{\varphi_m^L} +  \sum_{1\le k \le L} \delta_k,
\]
where
\[
\delta_k = \sup_{S \in \mathcal{S}_k} \left|\frac1N
\sum_{n=0}^{N-1}\mathbf{1}_{S}\Big(n\pi_c(\mathbf{e}_1)+\alpha_k \bmod \mathcal{L}_m\Big) - \lambda_{\mathbf{v}}(S)
\right|
\]
with $\alpha_k=\alpha_k(N)$ chosen as in Corollary~\ref{cor:shifted}.
\end{lemma}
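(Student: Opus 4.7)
The plan is to fix any target $\omega \in (0,1]$, decompose the interval $[0,\omega)$ into at most one piece at each level $k \le L$ plus a tiny remainder, and match each piece with a union of elements of $\mathcal{S}_k$ via Proposition~\ref{prop:makeRotation} and Corollary~\ref{cor:shifted}. First I would write $\omega$ via its greedy $\varphi_m$-expansion $\omega = \sum_{k \ge 1} d_k \varphi_m^{-k}$ (this uses digits $d_k \in \{0,1\}$ avoiding $m$ consecutive $1$s, since $\varphi_m<2$ and $\varphi_m^m=\varphi_m^{m-1}+\cdots+1$) and set $\omega_L = \sum_{k=1}^L d_k \varphi_m^{-k}$. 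Then
\[
[0,\omega) = \Bigl(\bigsqcup_{k:\, d_k=1} [\omega_{k-1},\omega_{k-1}+\varphi_m^{-k})\Bigr) \sqcup [\omega_L,\omega),
\]
where the remainder has length less than $\varphi_m^{-L}/(\varphi_m-1)$. Comparing the formula $\mu_k(n)/\varphi_m^k = \sum_{i=0}^{k-1} \varepsilon_i(n)\varphi_m^{-(i+1)}$ (implicit in Proposition~\ref{prop:makeRotation}(\ref{it:Os1})) with the partial sums of the $\varphi_m$-expansion of $\omega$ shows that each piece $J_k = [\omega_{k-1},\omega_{k-1}+\varphi_m^{-k})$ is precisely the element of $\mathcal{C}_k$ corresponding to an integer $n_k$ with $\varepsilon_{k-1}(n_k)=0$, i.e., with $r(n_k)=0$.

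Next, I would apply Proposition~\ref{prop:makeRotation}(\ref{it:Os2}) to each $J_k$: since $r(n_k)=0$ the associated set on $\mathbf{v}^\bot/\mathcal{L}_m$ is $\nu_k\pi_c(\mathbf{e}_1) + B_m^k \mathcal{R}_m$, which by iterating the set equation \eqref{eq:setequation} in the spirit of Lemma~\ref{lem:over} is an essentially disjoint union of at most $m$ elements of $\mathcal{S}_k$. Corollary~\ref{cor:shifted}, via the shift $\alpha_k$, makes the correspondence between $\{n<N : V_{\varphi_m}(n) \in J_k\}$ and $\{n<N : n\pi_c(\mathbf{e}_1)+\alpha_k \in S_k \bmod \mathcal{L}_m\}$ unambiguous by moving all orbit points into subtile interiors, and the measure equality \eqref{eq:measureequality} ensures $|J_k|=\lambda_{\mathbf{v}}(S_k)$. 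Thus the local discrepancy of the $V_{\varphi_m}$-sequence on $J_k$ is bounded by $m\,\delta_k$ via the triangle inequality.

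Finally, summing over the at most $L$ non-empty pieces gives $\sum_{k=1}^L m\,\delta_k \ll \sum_{k=1}^L \delta_k$; the remainder $[\omega_L,\omega)$ is contained in a single element of $\mathcal{C}_L$ of length $\ll \varphi_m^{-L}$, so the same correspondence at level $L$ bounds its contribution by $\delta_L + \varphi_m^{-L}$. Taking the supremum over $\omega$ and collecting terms yields the stated inequality. The main obstacle is the bookkeeping between two languages: one must verify that the greedy $\varphi_m$-expansion of $\omega$ produces exactly the digit string of an integer in the linear recurrent system (so that $J_k \in \mathcal{C}_k$ with $r(n_k)=0$ automatically), and that the prefix-suffix walk used in the proof of Proposition~\ref{prop:makeRotation} transports this into the desired union of elements of $\mathcal{S}_k$; the essential boundary overlaps in \eqref{eq:subdiv} are precisely what forced the introduction of the shift $\alpha_k$, which must be threaded through the argument consistently.
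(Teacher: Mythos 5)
Your proposal is correct and follows essentially the same route as the paper's proof: the greedy $\varphi_m$-expansion decomposition of $[0,\omega)$ into at most one element of $\mathcal{C}_k$ (with $r=0$) per level plus a remainder of length $\ll\varphi_m^{-L}$ is exactly the paper's family $I_1,\ldots,I_L,I^*$, and the subsequent identification with unions of at most $m$ level-$k$ subtiles via Proposition~\ref{prop:makeRotation}, the shift $\alpha_k$ from Corollary~\ref{cor:shifted}, the measure equality \eqref{eq:measureequality}, and the blow-up of the remainder to a full element of $\mathcal{C}_L$ mirror the paper's argument step by step. The only differences are cosmetic (indexing the digits of $\omega$ directly instead of through the integer $\xi_k$, and the harmless constant $\varphi_m^{-L}/(\varphi_m-1)$ versus $\varphi_m^{-L}$ for the remainder).
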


\begin{proof}
Choose $x\in [0,1)$ arbitrary. We can construct a family $I_{1},\ldots,I_{L}, I^*$ of intervals such that $I_k=\emptyset$ or $I_{k} \in \mathcal{C}_{k}$ and $|I^*| < \varphi_m^{-L}$ in a way that 
\begin{equation}\label{eq:Iunion}
I_{1}\cup\cdots\cup I_{L}\cup I^*=[0,x) \quad\hbox{with no overlaps}.
\end{equation}
Indeed, as $\mathcal{C}_{L}$ is a partition of $[0,1)$ by Lemma~\ref{lem:inttes} we can choose $\eps_{L-1}\ldots\eps_0\in\{0,1\}^L$ containing no block of $m$ consecutive $1$s in a way that
$
\mu_L = \sum_{j=0}^{L-1} \eps_{L-1-j}\varphi^{j}
$
satisfies $\mu_L\varphi_m^{-L} \le x < (\mu_L+1)\varphi_m^{-L}$. Then let $I^*=[\mu_L\varphi_m^{-L},x)$. For the definition of $I_k$ let $\xi_k=0\cdot F_{k-1}^{(m)}+\eps_{k-2} F_{k-2}^{(m)}+\cdots + \eps_0F_0^{(m)}$ and use the notation of Lemma~\ref{lem:inttes}. Then, since the $(k-1)$-st digit of $\xi_k$ is $0$ we have $r(\xi_k)=0$ and we set 
\[
I_{k}=
\begin{cases}
\left[ 
\frac{\mu_k(\xi_k)}{\varphi_m^k},\frac{\mu_k(\xi_k)+\varphi_m^{r(\xi_k)}-\sum_{i=0}^{r(\xi_k)-1}\varphi_m^{i}}{\varphi_m^k}
\right)= \left[ \frac{\mu_k(\xi_k)}{\varphi_m^k},\frac{\mu_k(\xi_k)+1}{\varphi_m^k}
\right),& \eps_{k-1} =1,\\
\emptyset,& \eps_{k-1} =0.
\end{cases}
\]
Clearly, we have $I_k\in \mathcal{C}_k$ whenever it is nonempty and with these choices it is easy to check that \eqref{eq:Iunion} is true.

Set $y_n=V_{\varphi_m}(n)$ for convenience. In order to estimate $D_N( (y_n)_{n\ge 0} )$ it is sufficient to deal with intervals of the form $I_k$ and $I^*$ since by the triangle inequality we have
\begin{equation}\label{eq:1sttriangle}
\left\vert \frac{1}{N}\sum_{n=0}^{N-1}\mathbf{1}_{[0,x)}(y_n)-x
\right\vert
\le \sum_{k=1}^{L} \left\vert \frac{1}{N}\sum_{n=0}^{N-1}\mathbf{1}_{I_k}(y_n)-|I_k|
\right\vert + \left\vert \frac{1}{N}\sum_{n=0}^{N-1}\mathbf{1}_{I^*}(y_n)-|I^*|
\right\vert.
\end{equation}
On the right hand side it obviously suffices to take the sum over all values of $k$ corresponding to nonempty sets $I_k$ because we get no contribution otherwise. Since in this case $I_k \in \mathcal{C}_k$ we can apply Proposition~\ref{prop:makeRotation} and Corollary~\ref{cor:shifted}. Indeed, since $\mathcal{C}_{k}$ is a partition of $[0,1)$ by Lemma~\ref{lem:inttes}, Proposition~\ref{prop:makeRotation} yields
\[
y_n \in I_k  \qquad \Longrightarrow \qquad n\pi_c(\mathbf{e}_1) \;\in\; \xi_k\pi_c({\mathbf{e}_1}) + B_m^k \bigcup_{i=1}^{m} \mathcal{R}_m(i) + \mathcal{L}_m.
\]
The right hand side is a union of $m$ level $k$ subtiles and since the elements of $\mathcal{S}_k$ overlap on their boundaries by Lemma~\ref{lem:over}, the converse implication is not true. Thus we have to use Corollary~\ref{cor:shifted} which implies the existence of $\alpha_k \in \mathbf{v}^{\bot}$ such that 
\begin{equation}\label{eq:reversable}
y_n \in I_k  \qquad \Longrightarrow \qquad n\pi_c(\mathbf{e}_1)+\alpha_k \;\in\; \xi_k\pi_c({\mathbf{e}_1}) +  \bigcup_{i=1}^{m} \mathrm{int}(B_m^k\mathcal{R}_m(i)) + \mathcal{L}_m.
\end{equation}
Since the interiors of the elements of $\mathcal{S}_k$ are pairwise disjoint by Lemma~\ref{lem:over}, the implication in \eqref{eq:reversable} can be reversed and it becomes an equivalence. By this equivalence, for each $I_k$ there is a union of $m$ elements $S_1,\ldots, S_{m} \in \mathcal{S}_k$ ({\it viz.}\ $S_i=B_m^k\mathcal{R}_m(i)+\xi_k\pi_c({\mathbf{e}_1}) $) such that
\[
\sum_{n=0}^{N-1}\mathbf{1}_{I_k}(y_n) = \sum_{n=0}^{N-1}\mathbf{1}_{S_1\cup\cdots\cup S_{m}}\Big(n\pi_c(\mathbf{e}_1)+\alpha_k \mod \mathcal{L}_m\Big).
\]
Thus, observing that \eqref{eq:measureequality} yields $|I_k|=\lambda_{\mathbf{v}}(S_1\cup\cdots\cup S_{m})$ and applying the triangle inequality again we gain for each $k\in\{1,\ldots, L\}$
\begin{equation}\label{eq:2ndtriangle}
\left\vert \frac{1}{N}\sum_{n=0}^{N-1}\mathbf{1}_{I_k}(y_n)-|I_k|
\right\vert 
\le 
\sum_{i=1}^{m} \left\vert \frac{1}{N}\sum_{n=0}^{N-1}\mathbf{1}_{S_i}\Big(n\pi_c(\mathbf{e}_1)+\alpha_{k} \mod \mathcal{L}_m\Big)-\lambda_{\mathbf{v}}(S_i) \right\vert
\end{equation}
($\alpha_k$ depends only on on $k$ and $N$). To treat the interval $I^*$ put $J=[\mu_L\varphi_m^{-L},(\mu_L+1)\varphi_m^{-L})$ and note that $J\supset I^*$ is an element of $\mathcal{C}_L$. We have
\begin{equation}\label{eq:gr1}
\sum_{n=0}^{N-1}\mathbf{1}_{I^*}(y_n) \le \sum_{n=0}^{N-1}\mathbf{1}_{J}(y_n)
\le 
\left|
\sum_{n=0}^{N-1}\mathbf{1}_{J}(y_n)-|J|
\right| + |J|
\end{equation}
and, hence,
\begin{equation}\label{eq:gr2}
\left|
\sum_{n=0}^{N-1}\mathbf{1}_{I^*}(y_n)-|I^*|
\right| \le \left|
\sum_{n=0}^{N-1}\mathbf{1}_{J}(y_n)-|J|
\right| + |J| + |I^*|.
\end{equation}
Since $J\in \mathcal{C}_L$ we may apply Proposition~\ref{prop:makeRotation} and Corollary~\ref{cor:shifted} as above and since $|I^*|< |J|=\varphi_m^{-L}$ we gain
\begin{equation}\label{eq:3rdtriangle}
\left\vert
\sum_{n=0}^{N-1}\mathbf{1}_{I^*}(y_n)-|I^*|
\right\vert \le 
\sum_{i=1}^{m} \left\vert \frac{1}{N}\sum_{n=0}^{N-1}\mathbf{1}_{S_i}(n\pi_c(\mathbf{e}_1)+\alpha_L \mod \mathcal{L}_m)-\lambda_{\mathbf{v}}(S_i)  
\right\vert+ \frac{2}{\varphi_m^{L}}
\end{equation}
for some $S_1,\ldots,S_{m}\in \mathcal{S}_L$.
Now we insert \eqref{eq:2ndtriangle} and \eqref{eq:3rdtriangle} in $\eqref{eq:1sttriangle}$ and take the supremum over all $x\in[0,1)$. This yields the result.
\end{proof}

Lemma~\ref{lem:local1} implies Ninomiya's~\cite{Ninomiya:98} low-discrepancy estimate for the $\varphi_m$-adic van der Corput sequence since the level $k$ subtiles of the Rauzy fractals are {\em bounded remainder sets} for the rotation by $\pi_c(\mathbf{e}_i)$ (see~\cite[Theorem~1]{BST:16}). Moreover, its proof paves the way for the proof of the following proposition.

\begin{proposition}\label{prop:product}
Fix $m_1,\ldots,m_s\ge 2$, let $N\in \N$ be given, and choose $L_1,\ldots, L_s$ in a way that $F_{L_j-1}^{(m_j)}\le N - 1 < F_{L_j}^{(m_j)}$. Then for $\boldsymbol{\beta}=(\beta_1,\ldots,\beta_s)=(\varphi_{m_1},\ldots,\varphi_{m_s})$
\begin{equation}\label{eq:prop42}
D_N( (H_{\boldsymbol{\beta}}(n))_{n\ge 0} ) \ll \sum_{i=1}^s\frac{1}{\beta_i^{L_i}} + \sum_{1\le k_1 \le L_1}\cdots \sum_{1\le k_s \le L_s} \delta_{k_1,\ldots, k_s},
\end{equation}
where
\begin{align*}
&\delta_{k_1,\ldots, k_s} = \sup_{S_1 \in \mathcal{S}_{k_1}^{(m_1)},\ldots,S_s \in \mathcal{S}_{k_s}^{(m_s)}} 
\left|\frac1N
\sum_{n=0}^{N-1}\prod_{i=1}^s\mathbf{1}_{S_i}\Big(n\pi_{c,m_i}(\mathbf{e}_1)+\alpha_{k_i} \bmod \mathcal{L}_{m_i}\Big) - \prod_{i=1}^s\lambda_{\mathbf{v}_{m_i}}(S_i)
\right|.
\end{align*}
Here $\alpha_{k_i}=\alpha_{k_i}(N)$ is chosen as in Corollary~\ref{cor:shifted}.

\end{proposition}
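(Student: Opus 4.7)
The strategy is to mimic the proof of Lemma~\ref{lem:local1} coordinate by coordinate and then expand products of indicators. Fix a test box $[0,\omega_1)\times\cdots\times[0,\omega_s)$ with $0<\omega_i\le 1$. For each $i$ I apply the construction from the proof of Lemma~\ref{lem:local1} to write
\[
[0,\omega_i) = I_1^{(i)}\cup\cdots\cup I_{L_i}^{(i)}\cup I^{*,(i)}
\]
as an essentially disjoint union in which each $I_k^{(i)}$ is either empty or an element of $\mathcal{C}_k^{(m_i)}$ and $|I^{*,(i)}|<\beta_i^{-L_i}$. Setting $B_i = I_1^{(i)}\cup\cdots\cup I_{L_i}^{(i)}$ we have $\mathbf{1}_{[0,\omega_i)} = \mathbf{1}_{B_i} + \mathbf{1}_{I^{*,(i)}}$ and $\omega_i = |B_i| + |I^{*,(i)}|$. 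Multiplying out the product indicator and the product of the $\omega_i$, the difference
\[
\prod_{i=1}^s \mathbf{1}_{[0,\omega_i)}(V_{\beta_i}(n)) - \prod_{i=1}^s \omega_i
\]
splits into $2^s$ contributions indexed by subsets $T\subseteq\{1,\ldots,s\}$ (with $\mathbf{1}_{I^{*,(i)}}$ chosen for $i\in T$ and $\mathbf{1}_{B_i}$ for $i\notin T$). A single application of the triangle inequality reduces the discrepancy to a sum of these $2^s$ pieces.

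For the main term $T=\emptyset$ I expand further by writing $\mathbf{1}_{B_i}=\sum_{k=1}^{L_i}\mathbf{1}_{I_k^{(i)}}$ in each factor. This produces a sum over $(k_1,\ldots,k_s)\in\{1,\ldots,L_1\}\times\cdots\times\{1,\ldots,L_s\}$. For each tuple with every $I_{k_i}^{(i)}$ nonempty, Proposition~\ref{prop:makeRotation} together with Corollary~\ref{cor:shifted} replaces $\mathbf{1}_{I_{k_i}^{(i)}}(V_{\beta_i}(n))$ by $\sum_{j=1}^{m_i}\mathbf{1}_{S_{i,j}}(n\pi_{c,m_i}(\mathbf{e}_1)+\alpha_{k_i}\bmod \mathcal{L}_{m_i})$ for suitable interior-disjoint level-$k_i$ subtiles $S_{i,1},\ldots,S_{i,m_i}\in\mathcal{S}_{k_i}^{(m_i)}$, while \eqref{eq:measureequality} matches $|I_{k_i}^{(i)}|=\sum_j\lambda_{\mathbf{v}_{m_i}}(S_{i,j})$. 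Expanding these finite products and applying the triangle inequality once more bounds the main term by $m_1\cdots m_s\sum_{k_1,\ldots,k_s}\delta_{k_1,\ldots,k_s}$, i.e.\ the double sum on the right of~\eqref{eq:prop42}.

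For the tail pieces $T\neq\emptyset$ I enlarge each $I^{*,(i)}$ with $i\in T$ to the element $J^{(i)}\in\mathcal{C}_{L_i}^{(m_i)}$ of length $\beta_i^{-L_i}$ that contains it (exactly as for the auxiliary interval $J$ in the proof of Lemma~\ref{lem:local1}) and keep expanding $\mathbf{1}_{B_i}=\sum_k\mathbf{1}_{I_k^{(i)}}$ for $i\notin T$. Proposition~\ref{prop:makeRotation} and Corollary~\ref{cor:shifted} then translate each coordinate into sums of subtile indicators $S_{i,j}\in\mathcal{S}_{k_i}^{(m_i)}$ with $k_i=L_i$ for $i\in T$ and $k_i\in\{1,\ldots,L_i\}$ for $i\notin T$; these levels again lie in the index range of \eqref{eq:prop42}. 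The counting part is therefore bounded by another multiple of $\sum_{k_1,\ldots,k_s}\delta_{k_1,\ldots,k_s}$, while the measure part is at most $\prod_{i\in T}\beta_i^{-L_i}$. Summing over nonempty~$T$ produces the additional $\ll\sum_{i=1}^s\beta_i^{-L_i}$.

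Combining the two kinds of contributions and taking the supremum over $(\omega_1,\ldots,\omega_s)$ yields \eqref{eq:prop42}. The main obstacle is the bookkeeping in the tail: one has to ensure that no discrepancy $\delta_{k_1,\ldots,k_s}$ with some index $k_i=0$ is required, since such terms are not available in the sum in \eqref{eq:prop42}. This is precisely why each $I^{*,(i)}$ must be enlarged to the level-$L_i$ interval $J^{(i)}$ rather than simply estimated by~$1$; apart from this subtlety, all remaining steps parallel the one-dimensional argument of Lemma~\ref{lem:local1}.
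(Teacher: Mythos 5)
Your proof is correct and follows essentially the same route as the paper: partition each coordinate interval into elements of $\mathcal{C}_k^{(m_i)}$ plus a short tail as in Lemma~\ref{lem:local1}, convert each resulting product box into level-$k_i$ subtiles via Proposition~\ref{prop:makeRotation}, Corollary~\ref{cor:shifted} and \eqref{eq:measureequality}, and absorb the tails by blowing them up to level-$L_i$ elements of $\mathcal{C}_{L_i}^{(m_i)}$ at a cost of $\ll\sum_i\beta_i^{-L_i}$. Your organization by subsets $T\subseteq\{1,\ldots,s\}$ is just a reshuffling of the paper's single triangle inequality over tuples $(k_1,\ldots,k_s)$ with $k_i\le L_i+1$, so the two arguments coincide in substance.
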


\begin{proof}
We start with an arbitrary box $[0,x_1)\times\cdots\times [0,x_s) \subset [0,1)^s$. Each of the intervals $[0,x_i)$ can be partitioned into subintervals $I_1^{i},\ldots, I_{L_i}^{i},I^{*,i}$ with $I_k^{i} \in \mathcal{C}_{k}^{(m_i)}$ or empty and $|I^{*,i}|\le \beta_i^{-L_i}$ (see the proof of Lemma~\ref{lem:local1}). Using the triangle inequality we obtain, setting $\mathbf{y}_n=H_{\boldsymbol{\beta}}(n)$ and $I_{L+1}^i = I^{*,i}$ for convenience,
\begin{align*}
&\left\vert \frac{1}{N}\sum_{n=0}^{N-1}\mathbf{1}_{[0,x_1)\times\cdots\times [0,x_s)}({\mathbf y}_n)-x_1\cdots x_s
\right\vert\le \sum_{k_1=1}^{L_1+1} \ldots \sum_{k_s=1}^{L_s+1} \left\vert \frac{1}{N}\sum_{n=0}^{N-1}\mathbf{1}_{I_{k_1}^1\times \cdots \times I_{k_s}^s}(\mathbf{y}_n)-|I_{k_1}^{1}|\cdots|I_{k_s}^{s}|
\right\vert.
\end{align*}
For each box $I_{k_1}^{1} \times \cdots \times I_{k_s}^{s}$ with $k_1\le L_1,\ldots, k_s \le L_s$ we can argue in the same way as in the proof of Lemma~\ref{lem:local1} to see that 
\begin{align*}
&\left\vert \frac{1}{N}\sum_{n=0}^{N-1}\mathbf{1}_{I_{k_1}^1\times \cdots \times I_{k_s}^s}(\mathbf{y}_n)-|I_{k_1}^{1}|\cdots|I_{k_s}^{s}|
\right\vert 
\\&
\hskip 1.5cm
\le 
\sum_{j_1=1}^{m_1}\ldots \sum_{j_s=1}^{m_s} \left\vert \frac{1}{N}\sum_{n=0}^{N-1}\prod_{i=1}^s
\mathbf{1}_{I_{k_i}^i}(V_{\beta_i}(n))-
\prod_{i=1}^s\lambda_{\mathbf{v}_{m_i}}(S_{ij_i}) \right\vert
\\&
\hskip 1.5cm
=
 \sum_{j_1=1}^{m_1}\ldots \sum_{j_s=1}^{m_s} \left\vert \frac{1}{N}\sum_{n=0}^{N-1}\prod_{i=1}^s\mathbf{1}_{S_{ij_i}}\Big(n\pi_{c,m_i}(\mathbf{e}_1)+\alpha_{k_{i}} \mod \mathcal{L}_{m_i}\Big)-\prod_{i=1}^s\lambda_{\mathbf{v}_{m_i}}(S_{ij_i}) \right\vert
\end{align*}
for certain $S_{ij} \in \mathcal{S}_{k_i}^{(m_i)}$ ($1\le i \le s,\, 1\le j \le m_i$). If some of the $k_i$ equal $L_i+1$ then blowing up the according intervals $I_{L+1}^i =I^{*,i}$ as in the proof of Lemma~\ref{lem:local1} we can treat them in the same way as the intervals $I_{k_i}^{i}$ with $k_i\le L_i$ causing an overall error bounded by $\sum_{i=1}^s\frac{1}{\beta_i^{L_i}}$. Taking the supremum over all boxes $[0,x_1)\times\cdots\times [0,x_s) \subset [0,1)^s$ gives the result.
\end{proof}

\section{The main result}

We are now in a position to state and prove our main result, a discrepancy estimate for $\boldsymbol{\beta}$-adic Halton sequences with $m$-bonacci arguments.

\begin{theorem}\label{thm:main}
Let $m_1,\ldots,m_s$ be pairwise distinct integers greater than or equal to $2$ and set $\boldsymbol{\beta}=(\varphi_{m_1},\ldots,\varphi_{m_s})$. If $\{1,\varphi_{m_1},\ldots,\varphi_{m_1}^{m_1-1},\ldots, \varphi_{m_s},\ldots,\varphi_{m_s}^{m_s-1}\}$ is linearly independent over $\Q$ then the discrepancy of the $\boldsymbol{\beta}$-adic Halton sequence $H_{\boldsymbol{\beta}}(n)$ satisfies 
\begin{equation}\label{eq:main}
D_N( (H_{\boldsymbol{\beta}}(n))_{n\ge 0} ) \ll N^{\frac{\max\{d_i-(m_i-1)\,:\,1\le i\le s\}}{(m_1-1)+\cdots + (m_s-1)}+\varepsilon}
\end{equation}
for each $\varepsilon > 0$. Here $d_i=\mathrm{dim_B}(\partial \mathcal{R}_{m_i})$, which is strictly smaller than $m_i-1$, denotes the box counting dimension of the boundary of the Rauzy fractal $\mathcal{R}_{m_i}$, $1\le i\le s$. 
\end{theorem}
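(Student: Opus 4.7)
The plan is to apply Proposition~\ref{prop:product} to reduce to local discrepancies on the $D$-dimensional product torus $\mathbb{T}^{D}:=\prod_{i=1}^{s}\mathbf{v}_{m_i}^{\bot}/\mathcal{L}_{m_i}$, where $D:=\sum_{i=1}^{s}(m_i-1)$, and then to bound these by combining a Fourier smoothing argument (which converts the fractal test sets into functions with controlled Fourier decay) with W.~M.~Schmidt's Subspace Theorem (which bounds linear forms in the rotation vector). By Proposition~\ref{prop:product} together with $\beta_i^{-L_i}\ll N^{-1}$, it suffices to bound each of the $\ll (\log N)^{s}$ local discrepancies $\delta_{k_1,\ldots,k_s}$ individually by $N^{\vartheta+\varepsilon/2}$ with $\vartheta:=\max_{i}(d_i-(m_i-1))/D$; the polylogarithmic losses from the outer sums are then absorbed into the $N^{\varepsilon}$ factor.

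Each $\delta_{k_1,\ldots,k_s}$ is the local discrepancy, with respect to a translate of the product $S=\prod_{i}B_{m_i}^{k_i}\mathcal{R}_{m_i}(j_i)$ of level-$k_i$ subtiles, of the rotation $n\mapsto n\boldsymbol{\alpha}:=(n\pi_{c,m_1}(\mathbf{e}_1),\ldots,n\pi_{c,m_s}(\mathbf{e}_1))$ on $\mathbb{T}^{D}$. I would sandwich each $\mathbf{1}_{S_i}$ between smooth functions $\phi_i^{\pm}$ supported in an $\eta_i$-neighbourhood of $S_i$. Because $\dim_{B}\partial\mathcal{R}_{m_i}=d_i<m_i-1$ and box-counting dimension is an affine invariant, the $\eta_i$-neighbourhood of $\partial S_i$ has $\lambda_{\mathbf{v}_{m_i}}$-measure $\ll\eta_i^{(m_i-1)-d_i}$ uniformly in $k_i$; thus $\int(\phi_i^{+}-\phi_i^{-})\ll\eta_i^{(m_i-1)-d_i}$ and, with a suitable mollifier, $|\widehat{\phi_i^{\pm}}(\mathbf{h}_i)|\ll_{A}(1+\eta_i\|\mathbf{h}_i\|_\infty)^{-A}$ for any $A>0$. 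Tensoring the $\phi_i^{\pm}$, expanding as a Fourier series on $\mathbb{T}^{D}$, subtracting the constant term and using $\bigl|\sum_{n=0}^{N-1}e(nx)\bigr|\le\|x\|^{-1}$ bounds $\delta_{k_1,\ldots,k_s}$ by
\[
\sum_{i=1}^{s}\eta_i^{(m_i-1)-d_i}+\frac{1}{N}\sum_{\mathbf{0}\neq\mathbf{h}\in\mathbb{Z}^{D}}\frac{1}{\|\mathbf{h}\cdot\boldsymbol{\alpha}\|}\prod_{i=1}^{s}(1+\eta_i\|\mathbf{h}_i\|_\infty)^{-A}.
\]

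At this point Schmidt's Subspace Theorem intervenes. Expanding the coordinates of $\pi_{c,m_i}(\mathbf{e}_1)$ in a $\mathbb{Z}$-basis of $\mathcal{L}_{m_i}$ yields elements of $\mathbb{Q}(\varphi_{m_i})$; the $\mathbb{Q}$-linear independence hypothesis on $\{1,\varphi_{m_1},\ldots,\varphi_{m_1}^{m_1-1},\ldots,\varphi_{m_s},\ldots,\varphi_{m_s}^{m_s-1}\}$ then forces $\{1\}$ together with the $D$ coordinates of $\boldsymbol{\alpha}$ to be $\mathbb{Q}$-linearly independent real algebraic numbers. Schmidt's theorem accordingly yields
\[
\|\mathbf{h}\cdot\boldsymbol{\alpha}\|\gg_{\varepsilon}\|\mathbf{h}\|_\infty^{-D-\varepsilon}\qquad(\mathbf{0}\neq\mathbf{h}\in\mathbb{Z}^{D}).
\]
Choosing $\eta_i=\eta$ for all $i$, taking $A$ large enough to enforce absolute convergence of the $\mathbf{h}$-sum, and optimising $\eta\asymp N^{-1/D}$ balances the smoothing error (dominated by $\eta^{\min_i((m_i-1)-d_i)}$) against the exponential-sum contribution, yielding $\delta_{k_1,\ldots,k_s}\ll N^{\vartheta+\varepsilon/2}$ with $\vartheta=\max_i(d_i-(m_i-1))/D$, exactly the exponent in \eqref{eq:main}.

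I expect the main obstacle to be the multivariable optimisation together with the verification of uniformity: constants in the boundary and Fourier estimates must be independent of $(k_1,\ldots,k_s)$, and one has to check that the chosen balance correctly identifies $\min_i((m_i-1)-d_i)$ as the controlling codimension. The strict inequality $d_i<m_i-1$ is a standard property of Rauzy-fractal boundaries and is taken as input, as is the affine invariance of the box-counting dimension used to handle the contracted images $B_{m_i}^{k_i}\mathcal{R}_{m_i}(j_i)$ uniformly in $k_i$.
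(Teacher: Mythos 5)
Your reduction via Proposition~\ref{prop:product} and your use of Schmidt's theorem to control $\|\mathbf{h}\cdot\boldsymbol{\alpha}\|$ are in the spirit of the paper, but the central analytic step --- mollifying the fractal test sets and summing $\frac{1}{N}\sum_{\mathbf{h}\neq\mathbf{0}}\|\mathbf{h}\cdot\boldsymbol{\alpha}\|^{-1}\prod_i(1+\eta\|\mathbf{h}_i\|_\infty)^{-A}$ --- does not deliver the claimed exponent, and your balancing at $\eta\asymp N^{-1/\Delta}$ (write $\Delta=(m_1-1)+\cdots+(m_s-1)$ and $c=\min_i\bigl((m_i-1)-d_i\bigr)$) is in fact false. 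The smoothed indicator of a product of Rauzy subtiles has no decay at all in the range $\|\mathbf{h}\|_\infty\lesssim\eta^{-1}$ beyond the trivial bound $O(1)$; in particular it lacks the product-type decay $\prod_j\max(|h_j|,1)^{-1}$ enjoyed by indicators of axis-parallel boxes, which is exactly what the finite-type-$1$ machinery (Erd\H{o}s--Tur\'an--Koksma plus Schmidt) needs. With only the mollifier decay, the $\mathbf{h}$-sum restricted to $\|\mathbf{h}\|_\infty\lesssim\eta^{-1}$ already contains, by Dirichlet's theorem, a frequency with $\|\mathbf{h}\cdot\boldsymbol{\alpha}\|\ll\|\mathbf{h}\|_\infty^{-\Delta}$, whose single contribution to your bound is $\gg_A \eta^{\Delta}N^{-1}\cdot\eta^{-\Delta}\cdot N=1$ when $\eta=N^{-1/\Delta}$ --- wait, more directly: that term alone is $\gg_A \frac{1}{N}\eta^{-\Delta}\asymp 1$, so the exponential-sum part cannot be $\ll N^{\vartheta+\varepsilon/2}$ with $\vartheta<0$. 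Even using the sharper count $\sum_{0<\|\mathbf{h}\|_\infty\le H}\|\mathbf{h}\cdot\boldsymbol{\alpha}\|^{-1}\ll H^{\Delta+\varepsilon}$, the best you can do is re-balance at $\eta\asymp N^{-1/(\Delta+c)}$, which yields the exponent $-c/(\Delta+c)+\varepsilon$, strictly weaker than the theorem's $-c/\Delta+\varepsilon$. So as written the argument proves a weaker statement, not \eqref{eq:main}.

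The paper's proof closes exactly this gap by a different geometric step: instead of smoothing the fractal set, it covers $Q(B_{m_1}^{k_1}\mathcal{R}_{m_1}\times\cdots\times B_{m_s}^{k_s}\mathcal{R}_{m_s})$ by a Whitney-type family of axis-parallel dyadic boxes (Lemma~\ref{lem:boxdim}): at most $\ll 2^{M(D+\varepsilon)}$ interior boxes of varying dyadic side lengths (their number is governed by the boundary dimension $D=\dim_B\partial P$, not by the ambient dimension $\Delta$), plus boundary boxes of total measure $\ll 2^{M(D-\Delta+\varepsilon)}$. Each box, being axis-parallel, has local discrepancy $\ll N^{\varepsilon-1}$ by Lemma~\ref{lem:discrepancy} (Schmidt's finite-type-$1$ bound), and with $2^{M}\asymp N^{1/\Delta}$ both contributions are $\ll N^{(D-\Delta)/\Delta+\varepsilon}$, giving the stated exponent since $D-\Delta\le\max_i(d_i-(m_i-1))$. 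The point you are missing is that the ``number of test sets times discrepancy per box'' bookkeeping scales with $2^{MD}$ precisely because boxes carry the $\prod_j\max(|h_j|,1)^{-1}$ Fourier structure; a direct mollification of the Rauzy product forfeits that structure, and rescuing your route would require genuine Fourier-decay estimates for indicators of Rauzy fractals, which you neither prove nor cite. (Your remaining concerns --- uniformity in $k_i$ and the linear-independence transfer to the rotation vector $(\varphi_{m_i}^{-2},\ldots,\varphi_{m_i}^{-m_i})$ from Lemma~\ref{lem:makecube} --- are comparatively minor and are handled in the paper, but they do not repair the balancing defect above.)
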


\begin{remark}\label{rem:independence}
The box counting dimension of $\partial \mathcal{R}_m$ can be calculated explicitly in terms of the so-called {\em boundary graph} (see for instance \cite[Theorem~4.4]{Siegel-Thuswaldner:09} for a formula or \cite[Theorem~3.1]{FFIW:06} for an estimate). 

A sufficient condition for the set  $\{1,\varphi_{m_1},\ldots,\varphi_{m_1}^{m_1-1},\ldots, \varphi_{m_s},\ldots,\varphi_{m_s}^{m_s-1}\}$ to be linearly independent over $\Q$ is that the degree of the extension $\mathbb{Q}(\varphi_{m_1},\ldots,\varphi_{m_s}):\mathbb{Q}$ satisfies $[\mathbb{Q}(\varphi_{m_1},\ldots,\varphi_{m_s}):\mathbb{Q}]=m_1\cdots m_s$.  This condition holds for instance if the integers $m_1,\ldots,m_s$ or the discriminants of the number fields $\mathbb{Q}(\varphi_{m_1}),\ldots,\mathbb{Q}(\varphi_{m_s})$ are pairwise relatively prime (see {\it e.g.} Mordell~\cite{Mordell:53}).
\end{remark}

We set the stage for the proof of Theorem~\ref{thm:main} by establishing a series of preparatory results. First we provide a technical lemma which shows how to conjugate the rotation by the vector $\pi_c(\mathbf{e}_1)$ on the fundamental domain of the lattice $\mathcal{L}_m$ in $\mathbf{v}_m^{\bot}$ to a rotation on the standard torus $\R^{m-1}/\Z^{m-1}\simeq [0,1)^{m-1}$.

\begin{lemma}\label{lem:makecube}
Let $m\ge 2$ be given. Then 
\begin{equation}\label{eq:explicitrotation}
\pi_c({\mathbf e}_1) = 
\sum_{i=2}^{m}\pi_c({\mathbf e}_1-{\mathbf e}_i)\varphi_m^{-i} ,
\end{equation}
{\em i.e.},  the rotation $n\pi_c(\mathbf{e}_1) + \alpha \mod \mathcal{L}_m$, $n\in \mathbb{N}$, is conjugate to the rotation 
\[
n\left(\varphi_m^{-2},\ldots, \varphi_m^{-m}\right)+Q_m\alpha \bmod \Z^{m-1},\qquad n\in \mathbb{N},
\]
by a linear conjugacy $Q_m$.
\end{lemma}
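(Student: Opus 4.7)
The plan is to reduce the vector identity \eqref{eq:explicitrotation} to a single statement about the kernel of $\pi_c$, verify that directly, and then transport the identity through the obvious lattice isomorphism.

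Step one: establish \eqref{eq:explicitrotation}. Using linearity of $\pi_c$, the right-hand side equals $\pi_c\bigl((1-\varphi_m^{-1})\mathbf{e}_1 - \sum_{i=2}^m \varphi_m^{-i}\mathbf{e}_i\bigr)$, where the coefficient of $\mathbf{e}_1$ is simplified via the relation $\sum_{k=1}^m \varphi_m^{-k}=1$, i.e., the characteristic equation $\varphi_m^m=\varphi_m^{m-1}+\cdots+1$ divided through by $\varphi_m^m$. Thus \eqref{eq:explicitrotation} is equivalent to $\pi_c\bigl(\sum_{i=1}^m \varphi_m^{-i}\mathbf{e}_i\bigr)=0$. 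To see this, read off the right Perron eigenvector of $B_m$ directly from its (companion-like) form: rows $2,\ldots,m$ of $B_m\mathbf{u}=\varphi_m\mathbf{u}$ force $\mathbf{u}=(1,\varphi_m^{-1},\ldots,\varphi_m^{-(m-1)})^T$ (up to scale), and the first row is then automatic by the same characteristic relation. Since $\sum_{i=1}^m\varphi_m^{-i}\mathbf{e}_i=\varphi_m^{-1}\mathbf{u}$ and $\pi_c$ is the projection along $\mathbf{u}$, this vector lies in $\ker\pi_c$, which proves \eqref{eq:explicitrotation}.

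Step two: construct the conjugacy. Because $\mathcal{L}_m$ is a lattice of full rank in the $(m-1)$-dimensional space $\mathbf{v}_m^{\bot}$, the generators $\pi_c(\mathbf{e}_1-\mathbf{e}_2),\ldots,\pi_c(\mathbf{e}_1-\mathbf{e}_m)$ form an $\mathbb{R}$-basis of $\mathbf{v}_m^{\bot}$. Define the linear isomorphism $Q_m:\mathbf{v}_m^{\bot}\to\mathbb{R}^{m-1}$ by $Q_m\pi_c(\mathbf{e}_1-\mathbf{e}_i)=\mathbf{e}_{i-1}$ for $i=2,\ldots,m$. By construction $Q_m$ sends $\mathcal{L}_m$ onto $\mathbb{Z}^{m-1}$, so it induces an isomorphism of the quotient tori. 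Applying $Q_m$ to \eqref{eq:explicitrotation} gives $Q_m\pi_c(\mathbf{e}_1)=(\varphi_m^{-2},\ldots,\varphi_m^{-m})$, and linearity yields $Q_m(n\pi_c(\mathbf{e}_1)+\alpha)=n(\varphi_m^{-2},\ldots,\varphi_m^{-m})+Q_m\alpha$; reducing mod $\mathbb{Z}^{m-1}$ is the asserted conjugacy.

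There is no real obstacle here: once one notices that $(1,\varphi_m^{-1},\ldots,\varphi_m^{-(m-1)})$ is the Perron eigenvector of $B_m$, the identity \eqref{eq:explicitrotation} collapses to the characteristic equation for $\varphi_m$, and the second assertion is just expressing this identity in the lattice basis that trivializes $\mathcal{L}_m$.
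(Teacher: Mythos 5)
Your proof is correct, and it takes a cleaner route than the paper does. The paper disposes of \eqref{eq:explicitrotation} by a ``somewhat tedious'' direct computation: it notes that each coordinate of both sides lies in $\mathbb{Q}(\varphi_m)$ and then compares the coefficients of $\varphi_m^i$, $0\le i\le m-1$, coordinate by coordinate; the conjugacy statement is then declared immediate. You instead observe that, after using $\sum_{k=1}^m\varphi_m^{-k}=1$, the identity is equivalent to $\pi_c\bigl(\sum_{i=1}^m\varphi_m^{-i}\mathbf{e}_i\bigr)=0$, i.e.\ to the assertion that $(\varphi_m^{-1},\ldots,\varphi_m^{-m})^T$ spans $\ker\pi_c=\R\,\mathbf{u}$, and you verify this by reading the Perron eigenvector off the companion-like structure of $B_m$ (rows $2,\ldots,m$ force $u_i=\varphi_m^{-(i-1)}u_1$, row $1$ reduces to the characteristic relation); since only the direction of $\mathbf{u}$ matters, the normalization is irrelevant. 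This replaces the $m$-fold coefficient comparison by a single use of the characteristic equation and makes transparent why the identity holds — it would work verbatim for any substitution whose incidence matrix has this companion shape. You also make explicit what the paper leaves implicit: defining $Q_m$ on the basis $\pi_c(\mathbf{e}_1-\mathbf{e}_i)$, $2\le i\le m$ (a basis of $\mathbf{v}_m^\bot$ because $\mathcal{L}_m$ is a full-rank lattice there, as guaranteed by Lemma~\ref{lem:over}), so that $Q_m\mathcal{L}_m=\Z^{m-1}$ and $Q_m\pi_c(\mathbf{e}_1)=(\varphi_m^{-2},\ldots,\varphi_m^{-m})$, which is exactly the asserted conjugacy. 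Both arguments ultimately rest on $\varphi_m^m=\varphi_m^{m-1}+\cdots+1$; yours packages it more conceptually, the paper's requires no knowledge of the eigenstructure beyond the definition of $\pi_c$.
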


\begin{proof}
The identity in \eqref{eq:explicitrotation} can be verified by a (somewhat tedious) direct computation. Indeed, it is easy to see that each coordinate of \eqref{eq:explicitrotation} is  an element of $\mathbb{Q}(\varphi_m)$. Using this it suffices  to compare the coefficients of $\varphi_m^i$ for each $0\le i\le m-1$ in each coordinate. The conjugacy assertion follows immediately from \eqref{eq:explicitrotation}.
\end{proof}

Choose $m_1,\ldots, m_s \ge 2$. In what follows we denote by 
\begin{equation}\label{eq:Q}
Q=Q_{m_1}\times \cdots \times Q_{m_s} 
\end{equation}
the conjugacy between the rotation 
\begin{equation}\label{eq:rn}
R(n,\boldsymbol{\alpha})=\prod_{i=1}^s (n\pi_{c,m_i}(\mathbf{e}_1)+\alpha_i \bmod \mathcal{L}_{m_i}) 
\end{equation}
with offset $\boldsymbol{\alpha}=(\alpha_1,\ldots, \alpha_s)$ on the $(m_1-1)+\cdots+(m_s-1)$ dimensional torus $\prod_{i=1}^s (\mathbf{v}_{m_i}^\bot / \mathcal{L}_{m_i})$ and the rotation
\begin{equation}\label{eq:qrn}
QR(n,\boldsymbol{\alpha})=\prod_{i=1}^s\left(
n\left(\varphi_{m_i}^{-2},\ldots, \varphi_{m_i}^{-m_i}\right)+Q\alpha_i \bmod \Z^{(m_i-1)} \right)
\end{equation}
on the standard torus $[0,1)^{(m_1-1)+\cdots+(m_s-1)}$. This conjugacy exists by Lemma~\ref{lem:makecube}

In what follows we will need to study properties of products of the form
\[
P=P(k_1,\ldots,k_s)=Q(B_{m_1}^{k_1}\mathcal{R}_{m_1}\times \cdots \times B_{m_s}^{k_s}\mathcal{R}_{m_s}) \in \R^{(m_1-1)+\cdots+(m_s-1)}
\]
with $Q$ as in \eqref{eq:Q}. We will need the {\em box counting dimension} of $\partial P$, whose value doesn't depend on $k_1,\ldots, k_s$. It is defined by (see Falconer~\cite[Section~3.1]{Falconer:90})
\[
D=\mathrm{dim_B}(\partial P)=\lim_{\ell\to\infty}\frac{\log N(\partial P,\ell)}{\ell \log 2},  
\]
where $N(\partial P,\ell)$ is the number of boxes of side length $2^{-\ell}$ in $\R^{(m_1-1)+\cdots+(m_s-1)}$ arranged in a grid centered at $0$ having nonempty intersection with $\partial P$.
The boundary $\partial P$ is given by the union of cartesian products of the shape $(1\le i \le s)$
\[
Q(B_{m_1}^{k_1}\mathcal{R}_{m_1}\times \cdots\times B_{m_{i-1}}^{k_{i-1}}\mathcal{R}_{m_{i-1}}\times\partial B_{m_i}^{k_i}\mathcal{R}_{m_i} \times B_{m_{i+1}}^{k_{i+1}}\mathcal{R}_{m_{i+1}} \times \cdots \times B_{m_s}^{k_s}\mathcal{R}_{m_s})
\]
and, hence, its box counting dimension satisfies ({\it cf.~e.g.}~\cite[Product Formula~7.5]{Falconer:90})
\[
D \le \max\left\{ \sum_{j=1,\,  j\not=i}^{s}  (m_j-1) + \mathrm{dim_B}(\mathcal{R}_{m_i})  \;:\; 1\le i \le s \right\}.
\]
As mentioned above, in \cite{FFIW:06} an explicitly computable bound for $\mathrm{dim_B}(\mathcal{R}_{m})$ is given. Since  $\mathcal{R}_m$ admits a tiling w.r.t.\ the lattice $\mathcal{L}_m$ by Lemma~\ref{lem:over}, it follows from \cite[Theorem~4.1]{Siegel-Thuswaldner:09} that this bound is nontrivial for each $m\ge 2$, and, hence, we have
\begin{equation}\label{eq:boxestimate}
D < (m_1-1)+\cdots+(m_s-1).
\end{equation}

We now turn to a covering property for products of Rauzy fractals.

\begin{lemma}\label{lem:boxdim}
Let $m_1,\ldots,m_s\ge 2$ and $k_1,\ldots,k_s\ge 0$, and set 
\[
P=Q(B_{m_1}^{k_1}\mathcal{R}_{m_1}\times \cdots \times B_{m_s}^{k_s}\mathcal{R}_{m_s}).
\]
Let $D=\mathrm{dim_B}(\partial P)$ be the box counting dimension of $\partial P$ and fix $\varepsilon >0$. Then for each $M \in\N$ we can cover $P$ by boxes $U_1,\ldots,U_r, V_1,\ldots, V_{r'}\subset \R^{(m_1-1)+\cdots+(m_s-1)}$ with $r,r'\ll 2^{M(D+\varepsilon)}$ in the following way. 
\begin{itemize}
\item Each box $U_i$ has empty intersection with the complement of $P$ ($1\le i\le r$).
\item Each  box $V_i$ intersects the boundary of $P$ ($1\le i\le r'$) and 
$$
\sum_{i=1}^{r'}\mu(V_i)\ll 2^{M(D-(m_1-1)-\cdots-(m_s-1)+\varepsilon)},
$$
where $\mu$ denotes the Lebesgue measure on $\R^{(m_1-1)+\cdots+(m_s-1)}$.
\end{itemize}
\end{lemma}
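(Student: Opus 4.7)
The plan is a standard dyadic refinement argument driven directly by the definition of box counting dimension. Set $d=(m_1-1)+\cdots+(m_s-1)$, so that $P\subset\R^d$ is bounded. I would first enclose $P$ in a finite collection of closed axis-aligned cubes of side length $1$, and then recursively subdivide every cube that meets $\partial P$ into $2^d$ closed sub-cubes of half the side length. Whenever a cube (produced at any level $0\le\ell\le M$) lies entirely inside $P$, I record it as one of the $U_i$; whenever a cube is disjoint from $P$, I discard it. After $M$ rounds of subdivision, the cubes of side $2^{-M}$ that still meet $\partial P$ are declared to be the family $\{V_j\}$. By construction, $\bigcup_i U_i\cup\bigcup_j V_j\supset P$.

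For the counting, let $N_\ell$ denote the number of cubes of side $2^{-\ell}$ in this refinement tree that meet $\partial P$. Each such cube is contained in the union of a bounded number (depending only on $d$) of the standard $2^{-\ell}$-grid cubes appearing in the definition of $N(\partial P,\ell)$, so the hypothesis $D=\mathrm{dim_B}(\partial P)$ yields $N_\ell\ll 2^{\ell(D+\varepsilon)}$ for all $\ell\ge 0$. Every interior cube created at level $\ell+1$ is one of the $2^d$ children of a boundary cube at level $\ell$, whence the total number $r$ of cubes in $\{U_i\}$ satisfies
$$
r\;\ll\;1+\sum_{\ell=0}^{M-1}2^d\,N_\ell\;\ll\;\sum_{\ell=0}^{M-1}2^{\ell(D+\varepsilon)}\;\ll\;2^{M(D+\varepsilon)}
$$
by summing a geometric series (using $D+\varepsilon>0$). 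Analogously, $r'=N_M\ll 2^{M(D+\varepsilon)}$, and since every $V_j$ is a cube of side $2^{-M}$, one obtains the volume estimate
$$
\sum_{j=1}^{r'}\mu(V_j)=r'\cdot 2^{-Md}\;\ll\;2^{M(D-d+\varepsilon)},
$$
which, using $d=(m_1-1)+\cdots+(m_s-1)$, is exactly the required bound.

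The only delicate point is the comparison between our tree cubes and the axis-aligned grid cubes centred at $0$ in the definition of $N(\partial P,\ell)$; but any cube of side $2^{-\ell}$ is covered by at most $2^d$ adjacent standard grid cubes of the same side length, so the two counts differ by at most a dimensional constant. Apart from this small geometric observation, the argument is pure bookkeeping and I do not foresee a genuine obstacle: no structural property of the Rauzy fractals enters beyond the strict inequality $D<d$ guaranteed by \eqref{eq:boxestimate}, which is precisely what makes the geometric sum above collapse to its last term.
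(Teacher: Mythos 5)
Your argument is correct and is essentially the paper's own proof: a dyadic refinement in which interior cubes are counted via their parents meeting $\partial P$ (so $r\ll\sum_{\ell\le M}2^{\ell(D+\varepsilon)}\ll 2^{M(D+\varepsilon)}$), and the level-$M$ cubes meeting $\partial P$ serve as the $V_j$ with the stated volume bound. The only cosmetic difference is that you build a subdivision tree from unit cubes and compare with the centred grids, whereas the paper works with the nested centred grids $\mathcal{K}_\ell$ directly; this changes nothing of substance.
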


\begin{proof}
Cover $P$ by the collections $\mathcal{K}_\ell$ of boxes of side length $2^{-\ell}$ for each $\ell\in\{1,\ldots, M\}$ arranged in a grid centered at $0$. So for $\ell > 1$ each box in $\mathcal{K}_\ell$ is contained in a unique larger box of $\mathcal{K}_{\ell-1}$ which is called the {\em parent} of this box. Choose the boxes $U_i$ inductively as follows. First take all elements of $\mathcal{K}_1$ that have 
empty intersection with the complement of $P$. For $\ell >1$ take all elements of $\mathcal{K}_\ell$ that have 
empty intersection with the complement of $P$ and which has not been covered so far. Thus $\{U_1,\ldots, U_r\}$ contains, apart from elements of $\mathcal{K}_1$, only elements of $\mathcal{K}_\ell$ whose parents intersect the boundary of $P$. By the definition of the box counting dimension, the number $z_\ell$ of such elements satisfies $z_\ell \ll 2^{\ell(D+\varepsilon)}$. Summing up $z_\ell$ over all $1\le \ell\le M$ we obtain the bound for $r$ asserted in the statement. 

Since the sets $V_i$ can be chosen to be boxes from $\mathcal{K}_M$ in the claimed way by the definition of the box counting dimension the lemma is proved.
\end{proof}

In the last preparatory lemma we recall a classical discrepancy estimate for rationally independent rotations on the torus. We will use the following notation (see {\it e.g.}~Niederreiter~\cite{Niederreiter:73,Niederreiter:10}).  We say that $\boldsymbol{\gamma}\in\R^s$ is of {\em finite type $\eta_0$} if $\eta_0\in\R$ is the infimum of all $\eta\in\R$ for which there exists a positive constant $c=c(\eta,\boldsymbol{\gamma})$ such that
\[
\left(\prod_{j=1}^s\max(|h_j|,1)\right)^{\eta}||(h_1,\ldots,h_s)\cdot\boldsymbol{\gamma}|| \ge c
\]
holds for all $(h_1,\ldots,h_s)\in\Z^s\setminus\{\mathbf{0}\}$.

\begin{lemma}\label{lem:discrepancy}
Let $\boldsymbol{\gamma}=(\gamma_1,\ldots, \gamma_s)\in\R^s$ with algebraic numbers $\gamma_1,\ldots, \gamma_s$ and assume that $\{1,\gamma_1,\ldots, \gamma_s\}$ is linearly independent over $\mathbb{Q}$. Then for each $\varepsilon > 0$ we have
$$
D_N( (n\boldsymbol{\gamma} \bmod [0,1)^s)_{n\ge 0} ) \ll N^{\varepsilon-1}.
$$
\end{lemma}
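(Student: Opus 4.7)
The plan is to combine the Erd\H{o}s--Tur\'an--Koksma inequality with a Diophantine estimate that follows from W.~M.~Schmidt's Subspace Theorem. First I would recall the Erd\H{o}s--Tur\'an--Koksma inequality: for any integer $H\ge 1$,
\[
D_N\bigl((n\boldsymbol{\gamma}\bmod [0,1)^s)_{n\ge 0}\bigr) \ll \frac{1}{H} + \sum_{\mathbf{0}\ne\mathbf{h}\in\Z^s,\;\|\mathbf{h}\|_\infty\le H}\frac{1}{r(\mathbf{h})}\left|\frac{1}{N}\sum_{n=0}^{N-1} e^{2\pi i\,\mathbf{h}\cdot n\boldsymbol{\gamma}}\right|,
\]
where $r(\mathbf{h})=\prod_{j=1}^{s}\max(|h_j|,1)$ (see \cite{Kuipers-Niederreiter:74}). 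Since $\mathbf{h}\cdot n\boldsymbol{\gamma}=n(\mathbf{h}\cdot\boldsymbol{\gamma})$, the geometric sum bound yields
\[
\left|\sum_{n=0}^{N-1} e^{2\pi i n(\mathbf{h}\cdot\boldsymbol{\gamma})}\right| \le \min\!\left(N,\frac{1}{2\,\|\mathbf{h}\cdot\boldsymbol{\gamma}\|}\right).
\]

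The key input is that $\boldsymbol{\gamma}$ has finite type $\eta_0=1$. By assumption, $\gamma_1,\ldots,\gamma_s$ are algebraic and $\{1,\gamma_1,\ldots,\gamma_s\}$ is linearly independent over $\Q$. W.~M.~Schmidt's Subspace Theorem then implies that for every $\delta>0$ there exists $c=c(\delta,\boldsymbol{\gamma})>0$ such that
\[
r(\mathbf{h})^{1+\delta}\,\|\mathbf{h}\cdot\boldsymbol{\gamma}\|\ge c \qquad(\mathbf{h}\in\Z^s\setminus\{\mathbf{0}\});
\]
this is the standard finite-type statement for algebraic vectors (see \cite{Niederreiter:73,Niederreiter:10}). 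In other words, $\boldsymbol{\gamma}$ is of finite type $\eta_0\le 1$.

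Armed with this, I would split the sum over $\mathbf{h}$ according to whether $\|\mathbf{h}\cdot\boldsymbol{\gamma}\|\ge r(\mathbf{h})^{-1-\delta}/c$ forces the geometric sum bound $1/\|\mathbf{h}\cdot\boldsymbol{\gamma}\|$ to be used (the standard argument): choosing $H=N$ gives
\[
\frac{1}{N}\sum_{\mathbf{0}\ne\mathbf{h},\,\|\mathbf{h}\|_\infty\le N}\frac{1}{r(\mathbf{h})\|\mathbf{h}\cdot\boldsymbol{\gamma}\|}
\ll \frac{1}{N}\sum_{\mathbf{0}\ne\mathbf{h},\,\|\mathbf{h}\|_\infty\le N}\frac{r(\mathbf{h})^\delta}{r(\mathbf{h})} \ll \frac{(\log N)^{s}}{N^{1-\delta'}}
\]
for any $\delta'>\delta$, using the elementary estimate $\sum_{\|\mathbf{h}\|_\infty\le H}r(\mathbf{h})^{\delta-1}\ll H^{s\delta}(\log H)^s$. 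Together with the $1/H=1/N$ term this produces $D_N\ll N^{\varepsilon-1}$ after choosing $\delta$ sufficiently small depending on $\varepsilon$.

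The only non-routine step is the finite-type bound, which is entirely powered by Schmidt's Subspace Theorem; everything else is a textbook application of Erd\H{o}s--Tur\'an--Koksma. So the result is essentially a quotation of the Niederreiter-style chain ``algebraic independence $+$ Schmidt $\Rightarrow$ finite type $1$ $\Rightarrow$ discrepancy $\ll N^{\varepsilon-1}$''.
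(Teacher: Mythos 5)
Your overall route is exactly the paper's: Schmidt's theorem (a consequence of the Subspace Theorem) shows that $\boldsymbol{\gamma}$ is of finite type $1$, and the standard finite-type-to-discrepancy implication then gives $D_N\ll N^{\varepsilon-1}$; the paper simply quotes \cite[Theorem~2]{Schmidt:70} for the first step and \cite[p.~132, Exercise~3.17]{Kuipers-Niederreiter:74} (equivalently \cite[Theorem~1.80]{Drmota-Tichy:97}) for the second. The gap is in your attempt to prove the second step. From $r(\mathbf{h})^{1+\delta}\,\|\mathbf{h}\cdot\boldsymbol{\gamma}\|\ge c$ you get $\frac{1}{r(\mathbf{h})\,\|\mathbf{h}\cdot\boldsymbol{\gamma}\|}\ll r(\mathbf{h})^{\delta}$, not $r(\mathbf{h})^{\delta-1}$ as in your display; your bound is off by a factor $r(\mathbf{h})$, which can be as large as $N^{s}$. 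With the correct exponent, the term-by-term estimate only yields $\frac1N\sum_{0<\|\mathbf{h}\|_\infty\le N} r(\mathbf{h})^{\delta}$, which is of order $N^{s(1+\delta)-1}$ and hence useless (it does not even tend to $0$). So simply inserting the finite-type lower bound into each term of the Erd\H{o}s--Tur\'an--Koksma sum, as written, cannot work.

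What is missing is the counting/spacing argument that powers the standard proof: one splits the frequencies dyadically, both according to $r(\mathbf{h})$ and according to the size of $\|\mathbf{h}\cdot\boldsymbol{\gamma}\|$, and uses that for distinct $\mathbf{h},\mathbf{h}'$ with $\|\mathbf{h}\|_\infty,\|\mathbf{h}'\|_\infty\le H$ one has $\|(\mathbf{h}-\mathbf{h}')\cdot\boldsymbol{\gamma}\|\ge c\,r(\mathbf{h}-\mathbf{h}')^{-1-\delta}$, so the fractional parts $\|\mathbf{h}\cdot\boldsymbol{\gamma}\|$ are well spaced and only few of them can be small. This yields $\sum_{0<\|\mathbf{h}\|_\infty\le H}\frac{1}{r(\mathbf{h})\,\|\mathbf{h}\cdot\boldsymbol{\gamma}\|}\ll H^{\delta'}$ up to logarithmic factors, and taking $H=N$ gives the claim. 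Either carry out this argument (it is the content of the proof of \cite[Chapter~2, Theorem~3.2 and Exercise~3.17]{Kuipers-Niederreiter:74}, see also \cite{Niederreiter:73}), or do what the paper does and cite that result directly; your identification of Schmidt's theorem as the source of the finite-type-$1$ property is correct and coincides with the paper's first step.
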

\begin{proof}
Using a classical result by Schmidt~\cite[Theorem~2]{Schmidt:70} we see that a vector $(\gamma_1,\ldots, \gamma_s)$ of real algebraic numbers for which $\{1,\gamma_1,\ldots, \gamma_s\}$ is linearly independent over $\Q$ is of finite type $1$. By Kuipers and Niederreiter~\cite[p.~132, Exercise 3.17]{Kuipers-Niederreiter:74} (or \cite[Theorem~1.80]{Drmota-Tichy:97}) this implies the result.
\end{proof}

After these preparations we turn to the proof of Theorem~\ref{thm:main}.

\begin{proof}[{Proof of Theorem~\ref{thm:main}}]
In view of Proposition~\ref{prop:product} we have to estimate the quantities $\delta_{k_1,\ldots, k_s}$. First note that by the definition of the linear conjugacy $Q$ in \eqref{eq:Q}, setting $\boldsymbol{\alpha}=(\alpha_{k_1},\ldots, \alpha_{k_s})$, we can write
\begin{align*}
\delta_{k_1,\ldots, k_s} &= \sup_{S_1 \in \mathcal{S}_{k_1}^{(m_1)},\ldots,S_s \in \mathcal{S}_{k_s}^{(m_s)}} \left|\frac1N
\sum_{n=0}^{N-1}\mathbf{1}_{S_1\times \cdots \times S_s}(R(n,\boldsymbol{\alpha})) - \prod_{i=1}^s\lambda_{\mathbf{v}_{m_i}}(S_i)
\right|
\\
& = 
\sup_{S_1 \in \mathcal{S}_{k_1}^{(m_1)},\ldots,S_s \in \mathcal{S}_{k_s}^{(m_s)}} 
\left|\frac1N
\sum_{n=0}^{N-1}\mathbf{1}_{Q(S_1\times \cdots \times S_s)}(QR(n,\boldsymbol{\alpha})) - \mu(Q(S_1\times\cdots\times S_s))
\right|,
\end{align*}
with $R(n)=R(n,\boldsymbol{\alpha})$ as defined in \eqref{eq:rn}, $QR(n,\boldsymbol{\alpha})$ as in \eqref{eq:qrn}, and $\mu$ the Lebesgue measure on $\R^{(m_1-1)+\cdots+(m_s-1)}$. Since $\{1,\varphi_{m_1},\ldots,\varphi_{m_1}^{m_1-1},\ldots, \varphi_{m_s},\ldots,\varphi_{m_s}^{m_s-1}\}$ is linearly independent over $\mathbb{Q}$, due to \eqref{eq:qrn} the rotation $QR(n)$ satisfies the assumptions of Lemma~\ref{lem:discrepancy} (the offset $\boldsymbol{\alpha}$ doesn't change the discrepancy significantly, see~\cite[Lemma~1.7]{Drmota-Tichy:97}), and we have 
\begin{equation}\label{eq:discEst}
D_N((QR(n))_{n\ge 0}) \ll N^{\eps/3-1}
\end{equation}
for $\eps > 0$ chosen as in the statement of the theorem.

We will now estimate the quantities $\displaystyle \delta_{k_1,\ldots, k_s}$ in terms of this discrepancy. To this end choose $M=\lfloor\frac{ \log_2 N}{(m_1-1)+\cdots+(m_s-1)}\rfloor$ and cover $Q(S_1\times \cdots \times S_s)$ for $S_1\times \cdots \times S_s \in \mathcal{S}_{k_1}^{(m_1)}\times\cdots\times \mathcal{S}_{k_s}^{(m_s)}$ by boxes $\{U_1,\ldots, U_r\}$ and $\{V_1,\ldots, V_{r'}\}$ as specified in Lemma~\ref{lem:boxdim}. Set now $\mathbf{S}=S_1\times \cdots \times S_s$. Then, using the triangle inequality and arguing in the same way as in \eqref{eq:gr1} and \eqref{eq:gr2}, we get
\begin{align*}
\left|\frac1N
\sum_{n=0}^{N-1}\mathbf{1}_{Q\mathbf{S}}(QR(n)) - \mu(Q\mathbf{S})
\right|  &\le \sum_{j=0}^r
\left|\frac1N
\sum_{n=0}^{N-1}\mathbf{1}_{U_j}(QR(n)) - \mu(U_j)
\right| \\
&\hskip 0.3cm + \sum_{j=0}^{r'}
\left|\frac1N
\sum_{n=0}^{N-1}\mathbf{1}_{V_j\cap Q\mathbf{S}}(QR(n)) - \mu(V_j\cap Q\mathbf{S})
\right| \\
&\le \sum_{j=0}^r
\left|\frac1N
\sum_{n=0}^{N-1}\mathbf{1}_{U_j}(QR(n)) - \mu(U_j)
\right| \\
&\hskip 0.3cm + \sum_{j=0}^{r'} \left(
\left|\frac1N
\sum_{n=0}^{N-1}\mathbf{1}_{V_j}(QR(n)) - \mu(V_j)
\right| 
+2\mu(V_j)
\right).
\end{align*}
Since $U_j$ and $V_j$ are boxes, the moduli on the right hand side can be estimated by the discrepancy of $QR(n)$ (again the fact that the boxes are not located at the origin doesn't cause significant difference by~\cite[Lemma~1.7]{Drmota-Tichy:97}).  Hence, taking the supremum over all $\mathbf{S}\in \mathcal{S}_{k_1}^{(m_1)}\times\cdots\times \mathcal{S}_{k_s}^{(m_s)}$ and keeping in mind that by Lemma~\ref{lem:boxdim} we have the estimates $r,r'\ll 2^{M(D+\varepsilon/3)}$ and 
$$
\sum_{j=0}^{r'}\mu(V_j) \ll 2^{M(D-(m_1-1)-\cdots-(m_s-1)+\varepsilon/3)}
$$
this yields
$$
\delta_{k_1,\ldots, k_s} \ll 2^{M(D+\varepsilon/3)} D_N((R(n))_{n\ge 0}) + 2^{M(D-(m_1-1)-\cdots-(m_s-1)+\varepsilon/3)}.
$$
By \eqref{eq:discEst} we finally end up with
\begin{align*}
\delta_{k_1,\ldots, k_s} &\ll
N^{\frac{D+\eps/3}{(m_1-1)+\cdots+(m_s-1)}-1+\frac\eps3} + N^{\frac{D-(m_1-1)+\cdots+(m_s-1)}{(m_1-1)+\cdots+(m_s-1)}+\frac\eps3}\ll N^{\frac{\max\{d_i-(m_i-1)\,:\,1\le i\le s\}}{(m_1-1)+\cdots + (m_s-1)}+\frac{2\varepsilon}3}.
\end{align*}
Inserting this in Proposition~\ref{prop:product} proves \eqref{eq:main}; indeed the sums over $k_1,\ldots, k_s$ occurring in \eqref{eq:prop42} contribute only logarithmic factors that are absorbed by $N^{\eps/3}$, and $\sum_{i=1}^s\beta_i^{-L_i} \ll N^{-1}$. 

The claim that $\mathrm{dim_B}(\partial \mathcal{R}_{m_i})<m_i-1$ has already been treated in the paragraph preceding~\eqref{eq:boxestimate}.
\end{proof}

We conclude this section with an easy example.

\begin{example}\label{ex:1}
Consider the golden mean $\varphi_2$ and the dominant root $\varphi_3$ of the {\em tribonacci polynomial} $X^3-X^2-X-1$. The Rauzy fractal $\mathcal{R}_2$ is an interval, hence, $\mathrm{dim_B}\partial \mathcal{R}_2=0$. For $\mathcal{R}_3$ we know from \cite{Ito-Kimura:91} that $\mathrm{dim_B}\partial \mathcal{R}_3=1.09336\ldots$ (in fact, in \cite{Ito-Kimura:91} the Hausdorff dimension is determined, however, for the set $\partial \mathcal{R}_3$ the Hausdorff dimension is the same as the box counting dimension because the restriction of $B_3$ to $\mathbf{v}_3^\bot$ is a similarity transformation). Since $\mathbb{Q}(\varphi_2,\varphi_3)$ has degree 6 over $\mathbb{Q}$ the linear independence assumption in Theorem~\ref{thm:main} is satisfied and we obtain (choosing $\varepsilon>0$ sufficiently small)
\[
D_N((H_{(\varphi_2,\varphi_3)}(n))_{n\ge 0}) \ll N^{-0.30221}.
\]
\end{example}	
	
\section{Final remarks on possible further research}
There are several directions for further research on this topic. The first task would be to generalize Theorem~\ref{thm:main} to the full class of linear recurrences given in \eqref{eq:Barat} or in \cite{Hofer-Iaco-Tichy:15}. To do this, several obstacles have to be mastered. The reason is that the dominant root $\beta$ of the characteristic polynomial of the recurrences in \eqref{eq:Barat} is a Pisot number but in general not a unit. Again one can associate substitutions to these linear recurrences, however, the Rauzy fractals no longer live in Euclidean space but in an open subring of the ad\`ele ring $\mathbb{A}_{\mathbb{Q}(\beta)}$. The theory of these fractals is well developed (see {\it e.g.}~\cite{BaratEtAl:08,Minervino-Steiner:14,Minervino-Thuswaldner:14,Siegel:03}), and with some more technical effort they should relate the Halton sequences in question with certain rotations on these subrings. To estimate the discrepancy of these rotations generalizations of the Erd\H{o}s-Tur\'an-Koksma inequality ({\it cf.}\ \cite[Theorem~1.21]{Drmota-Tichy:97} for the Euclidean version) and Schlickewei's $\mathfrak{p}$-adic subspace theorem ({\it cf.}~\cite{Schlickewei:77}) could be of use. We want to come back to this in a forthcoming paper.

To be more general one could define Halton sequences for the substitutive case (based on Dumont-Thomas numeration in the sense of Steiner~\cite{Steiner:09}), where even more examples with symmetric languages should come up.

Beyond that it would be interesting to get results on Halton sequences related to $\beta$-expansions with asymmetric languages. In this case it is not so clear how to proceed and one needs to deal with the reverse language in some way to define the appropriate Rauzy fractals in order to derive the rotation related to the Halton sequence in question. Also generalizations of Drmota's hybrid case (see~\cite{Drmota:15}) deserve interest. 

Since the discrepancy estimates in Theorem~\ref{thm:main} are certainly not optimal it would be of great interest to gain a better understanding of the distribution properties of $\boldsymbol{\beta}$-adic Halton sequences that would lead to improved discrepancy estimates and to the characterization of bounded remainder sets (see~\cite{Steiner:06} for bounded remainder sets for $\beta$-adic van der Corput sequences).

\bibliography{vdC}
\bibliographystyle{abbrv}

\end{document}